\def\today{\number\day\space\ifcase\month\or   January\or February\or
   March\or April\or May\or June\or   July\or August\or September\or
   October\or November\or December\fi\   \number\year}
\theoremstyle{definition}
\newaliascnt{thmCt}{lma}
\newtheorem{thm}[thmCt]{Theorem}
\newaliascnt{corCt}{lma}
\newtheorem{cor}[corCt]{Corollary}
\newaliascnt{cnjCt}{lma}
\newaliascnt{propCt}{lma}
\newtheorem{prop}[propCt]{Proposition}
\newtheorem*{thm*}{Theorem}
\newtheorem*{cor*}{Corollary}
\newtheorem*{prop*}{Proposition}
\newtheorem*{qst*}{Question}
\newcounter{theoremintro}
\newtheorem{thmintro}[theoremintro]{Theorem}
\newtheorem{egintro}[theoremintro]{Example}
\newaliascnt{pgrCt}{lma}
\newaliascnt{dfCt}{lma}
\newtheorem{df}[dfCt]{Definition}
\newaliascnt{remCt}{lma}
\newtheorem{rem}[remCt]{Remark}
\newaliascnt{remsCt}{lma}
\newaliascnt{egCt}{lma}
\newtheorem{eg}[egCt]{Example}
\newaliascnt{exCt}{lma}
\newaliascnt{egsCt}{lma}
\newaliascnt{qstCt}{lma}
\newaliascnt{pbmCt}{lma}
\newaliascnt{notaCt}{lma}
\newtheorem{nota}[notaCt]{Notation}
\newcommand{\beq}{\begin{equation}}
\newcommand{\eeq}{\end{equation}}
\newcommand{\beqa}{\begin{eqnarray*}}
\newcommand{\eeqa}{\end{eqnarray*}}
\newcommand{\bal}{\begin{align*}}
\newcommand{\eal}{\end{align*}}
\newcommand{\bi}{\begin{itemize}}
\newcommand{\ei}{\end{itemize}}
\newcommand{\be}{\begin{enumerate}}
\newcommand{\ee}{\end{enumerate}}
\newcommand{\ep}{\varepsilon}
\newcommand{\zt}{\zeta}
\newcommand{\Z}{{\mathbb{Z}}}
\newcommand{\C}{{\mathbb{C}}}
\newcommand{\N}{{\mathbb{N}}}
\newcommand{\K}{{\mathcal{K}}}
\newcommand{\B}{{\mathcal{B}}}
\newcommand{\U}{{\mathcal{U}}}
\newcommand{\T}{{\mathbb{T}}}
\newcommand{\OI}{{\mathcal{O}_{\I}}}
\newcommand{\id}{{\mathrm{id}}}
\newcommand{\supp}{{\mathrm{supp}}}
\newcommand{\Aut}{{\mathrm{Aut}}}
\newcommand{\Hom}{{\mathrm{Hom}}}
\newcommand{\Ext}{{\mathrm{Ext}}}
\newcommand{\Ad}{{\mathrm{Ad}}}
\newcommand{\ifo}{if and only if }
\newcommand{\ca}{$C^*$-algebra}
\newcommand{\uca}{unital $C^*$-algebra}
\newcommand{\Rp}{Rokhlin property}
\newcommand{\cRp}{continuous Rokhlin property}
\newcommand{\I}{\infty}
\title{Equivariant 
KK-theory and the continuous Rokhlin property}
\date{\today}
\thanks{This material is based upon work supported by the
  US National Science Foundation through my thesis advisor's Grant
DMS-1101742, and by the the Deutsche Forschungsgemeinschaft
(SFB 878). Both sources of financial support are
gratefully acknowledged.}
\author[Eusebio Gardella]{Eusebio Gardella}
\address{Eusebio Gardella
Mathematisches Institut, Fachbereich Mathematik und Informatik der
Universit\"at M\"unster, Einsteinstrasse 62, 48149 M\"unster, Germany.}
\email{gardella@uni-muenster.de}
\urladdr{www.math.uni-muenster.de/u/gardella/}
\subjclass[2000]{Primary 46L55, 19K35; Secondary 46L35, 19K33}
\keywords{Group action, Kirchberg algebra, equivariant $KK$-theory, Universal Coefficient Theorem}
\begin{document}

\begin{abstract}
We introduce and study the continuous Rokhlin property for actions of 
compact groups on $C^*$-algebras. 
An important technical result is a characterization of the continuous Rokhlin property in terms of asymptotic retracts. As a consequence, we derive strong $KK$-theoretical obstructions to the
continuous Rokhlin property. Using these, we show that the UCT is preserved under formation of crossed
products and passage to fixed point algebras by such actions, even in the absence of nuclearity. 

Our analysis of the $KK$-theory of the crossed product allows us to prove
a $\T$-equivariant version of Kirchberg-Phillips: 
two circle actions with the continuous Rokhlin property on Kirchberg algebras are conjugate 
whenever they are $KK^{\T}$-equivalent. In the presence of the UCT, this is equivalent to having isomorphic equivariant 
$K$-theory. We moreover characterize the $KK^{\T}$-theoretical invariants that arise in this way.

Finally, we identify a $KK^{\T}$-theoretic obstruction to the continuous property, which is shown to be the only obstruction in the setting of 
Kirchberg algebras.
We show by means of explicit examples that the Rokhlin property is strictly weaker than the continuous Rokhlin property. 
\end{abstract} 

\maketitle

%\tableofcontents

\renewcommand*{\thetheoremintro}{\Alph{theoremintro}}
\section*{Introduction}

A celebrated result in ergodic theory is the Rokhlin lemma, which asserts that an
aperiodic, measure preserving ergodic transformation can be approximated
by finite cyclic shifts. In terms of operator algebras, this
is equivalent to writing the constant function 1 as a finite sum of projections
on which the automorphism acts via cyclic permutations.
Its wide applicability in measurable
dynamics motivated the search for noncommutative versions of the Rokhlin lemma.
For properly outer automorphisms of finite von Neumann algebras, this was
achieved by Connes \cite{Con_classification_1976}, and generalizations 
to actions of amenable groups on the
hyperfinite II$_1$-factor were obtained by Jones and Ocneanu. 

The Rokhlin property has also been extremely fruitful in the context of \ca s. Early
works include the studies of cyclic group actions on UHF-algebras by
%Herman and Jones \cite{HerJon_period_1982}, and 
Herman and Ocneanu \cite{HerOcn_spectral_1986}, and later
for automorphisms by Kishimoto \cite{Kis_rohlin_1995}. Izumi's praised results \cite{Izu_finiteI_2004}
and \cite{Izu_finiteII_2004} on finite
group actions on unital \ca s represent a cornerstone in the recent literature. (See \cite{GarSan_equivariant_2016}
for the non-unital case.)
%, and motivated a number of subsequent works.

%In \cite{OsaPhi_crossed_2012}, Osaka and Phillips showed that crossed products and fixed point algebras by finite group actions with the \Rp\ inherit a
%number of structural properties of the underlying algebra, using what they call ``flexible classes'' of \ca s.
The \Rp\ allows one to take averages over the group in such
a way that $\ast$-algebraic identities are approximately respected. 
%It is in this way that we think of the Rokhlin property as the technical condition
%that makes it possible to average over the group while preserving, at least up to some prescribed error, algebraic
%relations. 
When this averaging technique
is combined with some form of stability, one can show that a number of properties of the original
algebra pass to the fixed point algebra; see \cite{OsaPhi_crossed_2012, Gar_crossed_2014, Gar_crossed_2019}.

In this paper, we study this averaging process in more depth, under the assumption that the action have what we shall call the
\emph{continuous} Rokhlin property. 
%We focus on circle actions because we only prove classification theorems for these, but many
%of our results hold for arbitrary compact groups. When appropriate, we describe how to modify our arguments to deal with
%general compact groups.
The \cRp\ can be characterized in terms of asymptotic retracts:

\begin{thmintro}
(See \autoref{thm: existence of a.m.}). Let
$\alpha\colon G\to\Aut(A)$ be an action of a compact group $G$
on a \uca\ $A$. Then
$\alpha$ has the \cRp\ if and only if there exists a continuous path $(\Psi_t)_{t\in [0,\I)}$
of unital completely positive asymptotically multiplicative and 
asymptotically equivariant maps $\Psi_t\colon C(G)\otimes A\to A$ 
satisfying
$\lim\limits_{t\to\I}\Psi_t(1\otimes a)=a$ for all $a\in A$.
\end{thmintro}

An important consequence of Theorem~A is the fact that actions with the continuous Rokhlin property
preserve the UCT, even in the absence of nuclearity:

\begin{thmintro}(See \autoref{thm: UCTcRp}).
Let $G$ be a compact group and let $\alpha\colon G\to\Aut(A)$ 
be an action with the \cRp.
If $A$ satisfies the UCT, then so do $A\rtimes_\alpha G$ and $A^\alpha$.
\end{thmintro}

We point out that we do not work in the nuclear setting. This makes our arguments, particularly in the proof
of Theorem~A, necessarily more technical, but has the advantage of being completely general. 
%As an application of Theorem~B (for finite groups), 
%we answer a question of Phillips-Viola by showing that their example
%\cite{PhiVio_simple_2013} of a simple, exact \ca\ which 
%is not isomorphic to its opposite, also satisfies the UCT; see \autoref{thm:PhiVioUCT}.

Using the picture of $KK$-theory in terms of completely positive contractive asymptotic homomorphisms by Hough\-ton-Larsen and Thomsen \cite{HouTho_universal_1999}, it follows that, in the context of
Theorem~A, the group $KK(A^\alpha,B)$ is a direct summand of $KK(A,B)$ for any \ca\ $B$; see \autoref{cor: E-thy summand}. 
For circle actions, an analysis of the dual automorphism
of an action with the \cRp\ (see \autoref{thm:duality}) allows
us to prove an even stronger result. 
We say that two actions
$\alpha\colon\T\to\Aut(A)$ and $\beta\colon \T\to\Aut(B)$
on unital \ca s $A$ and $B$ are \emph{unitally $KK^\T$-equivalent},
if there exists an invertible element $\eta\in KK^\T(A,B)$
with $[1_A]\times \eta=[1_B]$ in $K_0^\beta(B)$.

%We say that a \ca\ $A$ is \emph{$KK$-symmetric} if there exists a 
%\ca\ $B$ such that $A\sim_{KK}B\oplus SB$.

\begin{thmintro}(See \autoref{prop:KKTequivalenceCTAalpha}).
Let $\alpha\colon \T\to\Aut(A)$ be an action with the continuous \Rp\ on a unital, separable \ca\ $A$.
Then there is a unital $KK^\T$-equivalence 
$A\sim_{KK^\T}C(\T)\otimes A^\alpha$. 
In particular, $K_0(A)\cong K_1(A)\cong K_0(A^\alpha)\oplus K_1(A^\alpha)$.
\end{thmintro}

In Section~3 we obtain $\T$-equivariant versions of a number of 
celebrated results about Kirchberg algebras, mainly due to Kirchberg 
and Kirchberg-Phillips. 
The first one in this direction provides a complete classification of 
circle actions with the \cRp\ on Kirchberg algebras in terms of
$KK^\T$-theory. 

%, in combination with a result of Nakamura from \cite{Nak_aperiodic_2000}, to obtain a classification
%result for such actions on Kirchberg algebras (purely infinite, simple,
%separable, unital, nuclear $C^*$-algebras),
%in terms of $KK$-theory (or $K$-theory in the UCT case). We can also completely
%describe the range of the invariant. These results can be summarized as follows:

\begin{thmintro}(See Theorems~\ref{thm: ClassCRp} and \ref{thm: rangeInvCRp}).
Let $A$ be a unital Kirchberg algebra.
\be
\item There exists an action of $\T$ on $A$ with the \cRp\ if and only if $A$ is nuclearly $KK$-symmetric (see \autoref{df:KKsymmetric}). 
In the presence of the UCT, this is equivalent to $K_0(A)\cong K_1(A)$.
\item Let $\alpha\colon\T\to\Aut(A)$ and
$\beta\colon\T\to\Aut(B)$ be actions with the continuous Rokhlin property, with $B$ a unital Kirchberg algebra. 
Then $(A,\alpha)$ and $(B,\beta)$
are conjugate if and only if they are unitally $KK^\T$-equivalent. 
In the presence of the UCT, this is equivalent to 
$(A,\alpha)$ and $(B,\beta)$ having isomorphic $\T$-equivariant $K$-theory. 
%the existence of an isomorphism
%\[(K^\T_0(A),[1_{A}],K^\T_1(A))\cong (K^\T_0(B),[1_{B}],K^\T_1(B)).\]
\item Suppose that $A$ satisfies the UCT and that 
$K_0(A)\cong K_1(A)$. 
Then a triple $(H_0,h_0,H_1)$ consisting of two abelian groups $H_0$ and $H_1$, and a distinguished
element $h_0\in H_0$, arises as the equivariant $K$-theory 
of a circle action on $A$ with
the \cRp\ if and only if there exists an isomorphism $\varphi\colon H_0\oplus H_1\to K_0(A)$ with $\varphi(h_0,0)=[1_A]$.
\ee
\end{thmintro}

It follows from parts~(2) and~(3) of Theorem~D that there exists, up to conjugacy, a unique action of $\T$ on $\mathcal{O}_2$
with the continuous \Rp.

It is well-known that every unital, nuclear, separable 
\ca\ is unitally $KK$-equivalent to a
(unique) 
unital Kirchberg algebra; see 
Proposition~8.4.5 in~\cite{RorSto_classification_2002} and 
\autoref{rem:NucSepKKeqKirch}.
For circle actions with the \cRp, the equivariant version of this result
is also true:

\begin{thmintro}(See \autoref{thm:EverycRpKKequivKirch}).
Let $\alpha\colon \T\to\Aut(A)$ be an action with the \cRp\
on a separable, unital, nuclear \ca\ $A$. Then there
exist a (unique) unital Kirchberg algebra $B$ and a (unique) 
action $\beta\colon \T\to\Aut(B)$
with the \cRp\ such that $(A,\alpha)$ is unitally 
$KK^\T$-equivalent to $(B,\beta)$.
\end{thmintro}

In the context of Kirchberg algebras, the difference between the continuous Rokhlin property and the \Rp\ amounts to the difference between the functors $KL$ and $KK$. Indeed, in this setting we identify 
precisely what the obstruction to the continuous Rokhlin property is:

\begin{thmintro}
Let $\alpha\colon\T\to\Aut(A)$ be an action with the \Rp\ on
a unital Kirchberg algebra $A$. Then $\alpha$
has the \cRp\ if and only if $KK(\widehat{\alpha})=1$ in $KK(A\rtimes_\alpha \T, A\rtimes_\alpha\T)$.
\end{thmintro}

The condition in the theorem above is not automatic, and we produce an explicit example
showing that there is a legitimate difference between the \Rp\ and the \cRp.

\begin{egintro}
There exist a unital Kirchberg algebra satisfying the UCT and a circle action $\alpha\colon\T\to\Aut(A)$ with the \Rp\ which
does not have the \cRp.
\end{egintro}

%\vspace{0.3cm}

%The paper is organized as follows. In Section 1, we
%introduce the definition of the continuous Rokhlin property for a circle action and develop its basic theory.
%Our definition of the continuous \Rp\
%is a strengthening of the \Rp\ of Hirshberg and Winter from \cite{HirWin_rokhlin_2007}, which asks for a continuous path of unitaries rather than a sequence. 

%Most of the hard work in this paper is done in Section~2, where we study the $KK$-theory of circle actions with the \cRp\ and prove
%Theorems~A,~B, and~C. 
%Section~3 contains the proof of Theorem~D.

%Finally, in Section~4 we find a $KK^{\T}$-theoretic obstruction to the 
%continuous Rokhlin property, and show that this is the only obstruction
%for Rokhlin actions on Kirchberg algebras. It follows that the two 
%notions are equivalent for actions on Kirchberg algebras with finitely
%generated $K$-theory, but differ in general by Example~F.

%\vspace{0.3cm}

\textbf{Acknowledgements.} Most of this work was done while the author was visiting the WWU M\"unster in
2013, and while the author was participating in the Thematic Program on Abstract Harmonic Analysis, Banach and Operator Algebras, at the
Fields Institute in 2014. The author thanks both Mathematics
Institutes for their hospitality and for providing a stimulating research environment.

The author is very grateful to Chris Phillips for several helpful conversations.
He would also like to express his gratitude to 
Selcuk Barlak, Martino Lupini, Ralf Meyer, Luis Santiago, and Hannes Thiel for several helpful discussions. He also thanks Masaki Izumi for sharing
some unpublished work of his (\cite{Izu_preparation_2014}), 
and Shirly Geffen for reading a preliminary version of this 
work.
Finally, the author thanks both anonymous referees for their
thorough reports, and in particular for suggesting a simpler
proof of \autoref{thm: existence of a.m.}.

\vspace{0.3cm}

\textbf{Notation and terminology.}
For a \ca\ $A$, we denote by $\Aut(A)$ the automorphism group of $A$, 
and by $SA$ its suspension.
If $A$ is moreover unital, then $\U(A)$ denotes the unitary group of $A$. We set $\N=\{1,2,\ldots\}$.

If $\alpha\colon G\to\Aut(A)$ is an action of a locally
compact group $G$ on $A$, then we will denote by $A^\alpha$ its fixed point subalgebra.
%
%For a \ca\ $A$, we set
%\begin{align*} \ell^\I(\N,A)&=\left\{(a_n)_{n\in\N}\in A^{\N}\colon \sup_{n\in\N}\|a_n\|<\I\right\};\\
%c_0(\N,A)&=\left\{(a_n)_{n\in\N}\in\ell^\I(\N,A)\colon \lim\limits_{n\to\I}\|a_n\|= 0\right\},\end{align*}
%and define its \emph{sequence algebra} $A_\I$ by $A_\I= \ell^\I(\N,A)/c_0(\N,A)$. We identify $A$ with the constant sequences in $\ell^\I(\N,A)$ and
%with its image in $A_\I$. We write $A_\I\cap A'$ for the \emph{central sequence algebra} of $A$, that is, the
%relative commutant of $A\in A_\I$. For a sequence $(a_n)_{n\in\N}\in A$, we denote by $[(a_n)_{n\in\N}]$ its image in $A_\I$. \\
%\indent 
%If $\alpha\colon G\to\Aut(A)$ is an action of $G$ on $A$, then there are actions of $G$ on $A_\I$ and on $A_\I\cap A'$, both denoted by $\alpha_\I$. Note that unless
%the group $G$ is discrete, these actions will in general not be continuous. 
When $G$ is abelian, the dual action of $\widehat{G}$ on 
$A\rtimes_\alpha G$ is denoted by $\widehat{\alpha}$. When $G$ is
discrete, this action is determined by
$\widehat{\alpha}_\chi(au_g)=\chi(g)au_g$ for all $\chi\in\widehat{G}$, for all $a\in A$ and for all $g\in G$.
For a compact group $G$, we denote by 
$\verb'Lt'\colon G\to\Aut(C(G))$ the action induced by left translation.

\section{\texorpdfstring{$KK$}{KK}-retracts and the UCT}
In this section, we introduce the definition of the \cRp\ for actions
of compact groups, and show that it is equivalent to the existence of 
a completely positive equivariant asymptotic morphism $C(G,A)
\rightrightarrows A$
which is the identity on $A$; see \autoref{thm: existence of a.m.}. 
The fact that this asymptotic morphism consists of completely positive
maps is crucial, and we obtain a number of consequences of this 
characterization. For example, we relate the $KK$-theory of $A$ to that of $A^\alpha$ in \autoref{cor: E-thy summand}, and we show that 
the UCT for $A$ implies the UCT for $A^\alpha$ and for 
$A\rtimes_\alpha G$; see \autoref{thm: UCTcRp}. For 
compact Lie groups, an independent treatment can be 
found in \cite{AraKub_compact_2017}. 

Some arguments in this section can be simplified if one is only interested in \emph{nuclear}
$C^*$-algebras, by using $E$-theory instead; see
\cite{Sza_short_2015}. 
Our approach, despite being more technical, requires only minimal assumptions and gives new information in cases 
of interest; see, for example, 
\cite{PhiVio_simple_2013}.

We recall the definition of a completely positive contractive asymptotic morphism from \cite{HouTho_universal_1999}.

\begin{df} \label{df: asymptotic morph}
Let $A$ and $B$ be \ca s. An \emph{asymptotic morphism} from $A$ to $B$,
written $\Psi\colon A\rightrightarrows B$, is a family $(\Psi_t)_{t\in [0,\I)}$ of maps $A\to B$, satisfying:
\be\item for every $a\in A$, the map $[0,\I)\to B$ given by $t\mapsto \Psi_t(a)$ is continuous;
\item for every $\lambda\in \C$ and every $a$ and $b\in A$, we have
\begin{align*} \lim\limits_{t\to\I}\|\Psi_t(\lambda a+b)&-\lambda\Psi_t(a)-\Psi_t(b)\|=0, \\
 \lim\limits_{t\to\I}\|\Psi_t(ab)-\Psi_t(a)\Psi_t(b)\|=0,\ \ &\mbox{ and } \ \ \lim\limits_{t\to\I}\|\Psi_t(a^*)-\Psi_t(a)^*\|=0.
\end{align*}\ee

Let $\Psi\colon A\rightrightarrows B$ be an asymptotic morphism.
We say that $\Psi$ is \emph{completely positive} (respectively, \emph{unital}),
if there exists $t_0\in [0,\I)$ such that $\Psi_t$ is completely positive (respectively, unital)
for all $t\geq t_0$. We say that $\Psi$ is \emph{asymptotically central}
if $\lim\limits_{t\to\I}\|\Psi_t(a)b-b\Psi_t(a)\|=0$ for all $a\in A$ and 
all $b\in B$. 
When $A$ and $B$ carry actions $\alpha$ and $\beta$ of some compact
group $G$, we say that $\Psi$ is 
\emph{equivariant} if for all $a\in A$, one has
\[\lim\limits_{t\to\I}\sup_{g\in G}\|\Psi_t(\alpha_g(a))-
\beta_g(\Psi_t(a))\|=0.\] 
\end{df}

%\begin{rem}
%$E$-theory was introduced by Connes and Higson in \cite{ConHig_deformations_1990}, using a suitable equivalence between
%asymptotic morphisms between $C^*$-algebras. Despite coinciding when the first variable is nuclear,
%$E$-theory and $KK$-theory do not in general agree. Even more,
%there are $C^*$-algebras that satisfy the UCT in $E$-theory, but do not satisfy the UCT (in $KK$-theory).%; see
%\cite{Ska_bifoncteur_1991} (we are thankful to Rasmus Bentmann for providing this reference).
%\end{rem}

We begin with the main definition of this work. We give it in a form that
is convenient for our purposes, and note that it can be rephrased in 
terms of central path algebras using the Choi-Effros lifting theorem for 
completely positive maps. For finite groups, this 
definiton has already appeared in Definition~3.1 of
\cite{PhiVio_simple_2013}, and was originally 
introduced by Phillips in~\cite{Phi_preparation_2021}.

\begin{df}\label{df: cRpG}
Let $G$ be a second-countable compact
group, let $A$ be a separable unital \ca, and let $\alpha\colon G\to\Aut(A)$ be a continuous action. We say that $\alpha$ has
the \emph{continuous Rokhlin property} if there exists a unital 
completely positive equivariant and asymptotically central
asymptotic morphism 
$\Phi\colon C(G)\rightrightarrows A$.\end{df} 

This is a strengthening of the Rokhlin property from \cite{HirWin_rokhlin_2007,Izu_finiteI_2004,Gar_crossed_2014, GarLup_applications_2018, Gar_circle_2014}: roughly
speaking, the difference is that sequences are replaced with continuous paths. We reproduce the definition below.

\begin{df}\label{df:Rp}
Let $G$ be a second-countable compact
group, let $A$ be a separable unital \ca, and let $\alpha\colon G\to\Aut(A)$ be a continuous action. We say that $\alpha$ has
the \emph{Rokhlin property} if there exists a 
sequence $(\varphi_n)_{n\in\N}$ of unital 
completely positive maps $\varphi_n \colon C(G) \to A$ satisfying
\bi
\item $\lim\limits_{n\to\I}\|\varphi_n(f_1f_2)-\varphi_n(f_1)(f_2)\|=0$
for all $f_1,f_2\in C(G)$;
\item $\lim\limits_{n\to\I}\|\varphi_n(f)a-a\varphi_n(f)\|=0$ for all $f\in C(G)$ and all $a\in A$;
\item $\lim\limits_{n\to\I}\sup\limits_{g\in G}\|\varphi_n(\alpha_g(f))-
\beta_g(\varphi_n(f))\|$ 
for all $f\in C(G)$.\ei\end{df}

In particular, any action with the \cRp\ has the \Rp. The 
converse is however not true; see \autoref{eg: not cRp 1} and \autoref{eg: not cRp 2}. 

On the other hand, many natural examples of actions 
with the Rokhlin property do have the continuous \Rp. The basic example is the action $\texttt{Lt}\colon G\to\Aut(C(G))$ by left translation, for which one may take 
$\Phi_t=\id_{C(G)}$ for all $t\in [0,\I)$. Another canonical example
is that of Izumi's model action of a finite group:

\begin{eg}\label{eg:ModelActionFiniteG} 
Let $G$ be a finite group, let $\lambda\colon G\to\U(\ell^2(G))$ be the 
left regular representation, and denote by $D_G$ the UHF-algebra 
$D_G=\bigotimes\limits_{n\in\N}\B(\ell^2(G))$. Denote by 
$\mu^G\colon G\to \Aut(D_G)$ the product-type action
$\mu^G_g=\bigotimes\limits_{n\in\N} \Ad(\lambda_g)$ for all $g\in G$. Then 
$\mu^G$ has the \cRp;
see Lemma~3.4 in~\cite{PhiVio_simple_2013}.
\end{eg} 

%The action $\texttt{Lt}$ can be used to produce actions of an arbitrary
%compact group $G$ on simple AH-algebras 
%with the continuous Rokhlin property, by taking diagonal connecting maps.
%The continuous Rokhlin property enjoys preservation proprties similar to 
%the Rokhlin property with respect to tensor products, direct sums, 
%quotients, etc. On the other hand, care is needed to deal with direct limits (in particular, to connect the unitaries from the different 
%finite stages). 

We do not focus on constructing further examples by hand, since large families are
constructed in \autoref{thm:duality} and \autoref{thm: rangeInvCRp}. 
We now give a different formulation of the \cRp\ using
central path algebras.

\begin{nota}\label{nota:CtsPathAlg}
Let $G$ be a compact group.
For a \ca\ $A$ and an action $\alpha\colon G\to\Aut(A)$, 
we denote by $C_{b,\alpha}([0,\I),A)$ the subalgebra of 
$C_b([0,\I),A)$ consisting of those elements where the 
canonical action induced by pointwise application of
$\alpha$ is strongly continuous. We write 
$A_{\mathrm{c},\alpha}$ for the quotient of 
$C_{b,\alpha}([0,\I),A)$ by the $G$-invariant ideal
$C_0([0,\I),A)$. We call $A_{\mathrm{c},\alpha}$ 
the \emph{path algebra} of $A$, and we write
$\pi_A\colon C_{b,\alpha}([0,\I),A)\to A_{\mathrm{c},\alpha}$
for the canonical quotient map. We denote the induced action by
$\alpha_{\mathrm{c}}\colon G\to\Aut(A_{\mathrm{c},\alpha})$.
Note that there is a canonical equivariant embedding
$A\to A_{\mathrm{c},\alpha}$ as (equivalence classes of)
constant functions, and we denote by
$A_{\mathrm{c},\alpha}\cap A'$ the relative commutant.
\end{nota}

\begin{rem}\label{rem:CtsPathAlg}
Path algebras are useful in connection with asymptotic morphisms,
since for a \ca\ $C$ with an acton $\gamma\colon G\to\Aut(C)$,
(unital, completely positive) 
equivariant asymptotic morphisms 
$\Psi\colon C\rightrightarrows A$ are in one-to-one
correspondence with (unital, completely positive) 
maps $\psi\colon C\to 
C_{b,\alpha}([0,\I),A)$ such that $\pi_A\circ\psi$ is an 
equivariant homomorphism: the correspondence is given by
$\psi(t)=\Psi_t$ for all $t\in [0,\I)$. 
Moreover, asymptotic centrality of 
$\Psi$ amounts to the fact that the range of $\pi_A\circ\psi$
is contained in $A_{\mathrm{c},\alpha}\cap A'$.
 \end{rem}

For an action $\alpha\colon G\to\Aut(A)$, we endow $C(G)\otimes A$ with the diagonal action $\texttt{Lt}\otimes\alpha$.
The following is the main technical result of this section.
The implication from (1) to (2) generalizes Proposition~3.7
of~\cite{PhiVio_simple_2013} from finite groups to compact
groups.

\begin{thm}\label{thm: existence of a.m.} Let $A$ be a unital separable \ca\ and let $\alpha\colon G\to\Aut(A)$ be an action of a second countable compact group $G$. Then the following are equivalent:
\be\item the action $\alpha$ has the continuous Rokhlin property; 
%Denote by $\iota\colon A^\alpha\to A$ the canonical inclusion.
\item there exists a unital completely positive equivariant 
asymptotic morphism
$\Upsilon\colon C(G)\otimes A \rightrightarrows A$ satisfying
$\lim\limits_{t\to\I}\Upsilon_t(1\otimes a)=a$ for all $a\in A$.
\ee
In particular, if $\alpha$ has the \cRp, then there exists an 
equivariant
asymptotic morphism $\Psi\colon A\rightrightarrows A^\alpha$ satisfying
$\lim\limits_{t\to\I}\Psi_t(a)=a$ for all $a\in A^\alpha$.
\end{thm}
\begin{proof}
Suppose that an asymptotic morphism $\Upsilon$ as in (2) exists. For 
$t\in [1,\I)$, let 
$\Phi_t\colon C(G)\to A$ be given by 
$\Phi_t(f)=\Upsilon_t(f\otimes 1_A)$ for all $f\in C(G)$. Then 
$\Phi=(\Phi_t)_{t\in [0,\I)}$ is a unital, completely positive
and equivariant asymptotic morphism, and it remains to check asymptotic
centrality. Given $f\in C(G)$ and $a\in A$, we have
\begin{align*}
\lim_{t\to\I}\|\Phi_t(f)a-a\Phi_t(a)\|&=\lim_{t\to\I}\|\Upsilon_t(f\otimes 1)\Upsilon_t(1\otimes a)- \Upsilon_t(1\otimes a)\Upsilon_t(f\otimes 1)\|=0,
\end{align*}
since $\Upsilon_t(f\otimes 1)\Upsilon_t(1\otimes a)$ and $\Upsilon_t(1\otimes a)\Upsilon_t(f\otimes 1)$ are both asymptotically equal to 
$\Upsilon_t(f\otimes a)$. Thus $\alpha$ has the 
\cRp.

We prove the converse.
Using the \cRp\ for $\alpha$ together with \autoref{rem:CtsPathAlg}, find a 
unital, completely positive map 
$\varphi\colon C(G)\to C_{b,\alpha}([0,\I),A)$ such that 
$\pi_A\circ\varphi\colon C(G)\to A_{\mathrm{c},\alpha}\cap A'$ 
is an equivariant homomorphism. 

Set $\phi=\pi_A\circ\varphi$.
Since the range of $\phi$ commutes with the canonical copy
of $A$ in $A_{\mathrm{c},\alpha}$, 
there is a unital, equivariant homomorphism 
$\phi\otimes \id_A\colon C(G,A)\to A_{\mathrm{c},\alpha}$
satisfying $(\phi\otimes\id_A)(1\otimes a)=a$
for all $a\in A$. By \autoref{rem:CtsPathAlg}, it suffices
to show that there is a unital, completely positive
lift of $\phi\otimes\id_A$ to a map 
$C(G,A)\to C_{b,\alpha}([0,\I),A)$. In particular, the 
$G$-action does not play a role anymore.

Using commutativity and separability of $C(G)$,
let $(k_n)_{n\in\N}$ be a sequence in $\N$, and let
$(\rho_n)_{n\in\N}$ and $(\sigma_n)_{n\in\N}$ be 
sequences of unital completely positive maps
$\rho_n\colon C(G)\to \C^{k_n}$ and 
$\sigma_n\colon \C^{k_n}\to A_{\mathrm{c},\alpha}\cap A'$ 
such that 
$\lim\limits_{n\to\I}\|(\sigma_n\circ\rho_n)(f)-\phi(f)\|=0$
for all $f\in C(G)$.
It follows that $(\sigma_n\circ\rho_n)\otimes\id_A$
converges pointwise in norm to $\phi\otimes\id_A$.

By Theorem~6 in~\cite{Arv_notes_1977} and since $C(G,A)$ is separable,
it follows that the set of maps $C(G,A)\to A_{\mathrm{c},\alpha}$
which have unital completely positive lifts is closed in the 
point-norm topology. In particular, it suffices to show that
the map 
\[(\sigma_n\circ\rho_n)\otimes\id_A\colon \C^{k_n}\otimes A\to 
 A_{\mathrm{c},\alpha}
\]
has a unital completely positive lift for every $n\in\N$.

Fix $n\in\N$. Let $e_1,\ldots,e_{k_n}$ denote the canonical
minimal projections in $\C^{k_n}$ adding up to the unit.
Elementary functional calculus allows us to find 
positive contractions 
$d_1,\ldots,d_{k_n}\in C_{b,\alpha}([0,\I),A)$ satisfying
$\pi_A(d_j)=\sigma_n(e_j)$ for all $j=1,\ldots,k_n$ and 
$\sum_{j=1}^{k_n}d_j=1$.
Let $\widetilde{\sigma}_n\colon \C^{k_n}\otimes A\to 
C_{b,\alpha}([0,\I),A)$ be the linear map determined by
\[\widetilde{\sigma}_n(e_j\otimes a)=d_j^{1/2}ad_j^{1/2}\]
for all $j=1,\ldots,k_n$ and all $a\in A$. It is clear that
$\widetilde{\sigma}_n$ is unital and completely positive, 
and that $\pi_A\circ\widetilde{\sigma}_n=\sigma_n\otimes\id_A$. 
Thus
$\widetilde{\sigma}_n\circ (\rho_n\otimes\id_A)$
is a unital completely positive lift of
$(\sigma_n\circ\rho_n)\otimes\id_A$, as desired. This proves that (1) implies (2).

For the last assertion in the theorem, 
we denote by $E^\alpha \colon A\to A^\alpha$ and by
$E^\gamma\colon C(G,A)\to C(G,A)^\gamma$ 
the canonical conditional expectations.
Let $j\colon A\to C(G,A)$ be the homomorphism given by
$j(a)(g)=\alpha_g(a)$ for all $g\in G$ and all $a\in A$, and 
note that $j(A)=C(G,A)^\gamma$. 
For $t\in [0,\I)$, let $\Psi_t\colon A\to A^\alpha$ be the unital, 
completely positive and equivariant map given
by $\Psi_t(a)=E^\alpha(\Upsilon_t(j(a)))$ for all $a\in A$. 
Since $\Upsilon$ is asymptotically equivariant, we get
\[\lim_{t\to\I}\|\Psi_t(a)-\Upsilon_t(E^\gamma(j(a))\|=0\] 
for all $a\in A$. Since $j(a)$ is $\gamma$-invariant, it follows that
$\lim_{t\to\I}\|\Psi_t(a)-\Upsilon_t(j(a))\|=0$ for all $a\in A$. 
Using that the restriction of 
$j$ to $A^\alpha$ is the canonical inclusion $A^\alpha\to C(G,A)$
as constant functions, we get 
\[\lim_{t\to\I}\Psi_t(a)=\lim_{t\to\I}\Upsilon_t(j(a))=a.\]
for all $a\in A^\alpha$. Finally, for $a,b\in A$, we have 
\[\lim_{t\to\I}\|\Psi_t(a)\Psi_t(b)-\Psi_t(ab)\|=
 \lim_{t\to\I}\|\Upsilon_t(j(a))\Upsilon_t(j(b))-\Upsilon_t(j(ab))\|=0,
\]
since $\Upsilon$ is asymptotically multiplicative. This finishes the proof.
\end{proof}

\begin{cor}\label{cor: E-thy summand} 
Let $\alpha\colon G\to\Aut(A)$ be an action of a
second countable compact group $G$ 
with the continuous Rokhlin property on a unital, separable
\ca\ $A$. Let $B$ be any separable \ca, and denote by 
\[\iota^*\colon KK(A,B)\to KK(A^\alpha,B) \ \ \mbox{ and } \ \  
\iota_*\colon KK(B,A^\alpha)\to KK(B,A) 
\]
the group homomorphisms induced by the canonical inclusion $\iota\colon A^\alpha\to A$. Then there exist $\Psi^*\colon KK(A^\alpha,B)\to KK(A,B)$ 
and $\Psi_*\colon KK(B,A)\to KK(B,A^\alpha)$ such that
$\iota^*\circ \Psi^*=\id_{KK(A^\alpha,B)}$ and $\Psi_\ast \circ \iota_*=\id_{KK(B,A^\alpha)}$. In particular,
\[KK(A,B)\cong KK(A^\alpha,B)\oplus \ker(\iota^*) \ \ \mbox{ and } \ \ 
KK(B,A)\cong KK(B,A^\alpha)\oplus \ker(\Psi_*).\]
\end{cor}
\begin{proof} Recall (see Theorem~4.2 in \cite{HouTho_universal_1999}) that given separable \ca s $A$ and $B$, the $KK$-group $KK(A,B)$ is canonically
isomorphic to the group of homotopy
classes of completely positive asymptotic morphisms 
$SA\rightrightarrows SB\otimes\K$.
The unital completely positive asymptotic morphism 
$\Psi\colon A\rightrightarrows A^\alpha$ 
constructed in \autoref{thm: existence of a.m.}
induces a group homomorphism $\Psi^*\colon KK(A^\alpha,B)\to KK(A,B)$ which satisfies
$$\iota^*\circ \Psi^*=\id_{KK(A^\alpha,B)},$$
since $\Psi\circ\iota$ is in fact asymptotically equal to the identity on $A^\alpha$ (not just homotopic). This proves the first claim. The existence of an isomorphism
$KK(A,B)\cong KK(A^\alpha,B)\oplus \ker(\iota^*)$ is then a standard fact in the theory of abelian groups.

The proof for $KK(B,A)$ is analogous, and is left to the reader. \end{proof}

We now turn to preservation of the UCT. 

\begin{df}\label{df: UCT}
Let $A$ be a separable \ca. We say that $A$ \emph{satisfies the UCT} if
for every separable \ca\ $B$, the following conditions are satisfied:
\be\item The natural map $\tau_{A,B}\colon KK(A,B)\to \Hom(K_\ast(A),K_\ast(B))$ defined by Kasparov
in \cite{Kas_equivariant_1988}, is surjective.
\item The natural map $\mu_{A,B}\colon \ker(\tau_{A,B})\to \Ext(K_\ast(A),K_{\ast+1}(B))$ is an isomorphism.\ee
If this is the case, by setting $\ep_{A,B}=\mu_{A,B}^{-1}\colon \Ext(K_\ast(A),K_{\ast+1}(B)) \to KK(A,B)$, we obtain a short exact sequence
$$\xymatrix@=1.35em{0\ar[r] &
\mathrm{Ext}(K_\ast(A),K_{\ast+1}(B))\ar[r]^-{\varepsilon_{A,B}}&
KK(A,B)\ar[r]^-{\tau_{A,B}}&\mathrm{Hom}(K_\ast(A),K_\ast(B))\ar[r]&0,}$$
which is natural on both variables because so are $\tau_{A,B}$ and $\mu_{A,B}$.
\end{df}

The following result generalizes Proposition~3.8 of
\cite{PhiVio_simple_2013} from finite groups to 
compact groups.
We stress the fact that there are no nuclearity 
assumptions in this theorem. 
 
\begin{thm}\label{thm: UCTcRp} 
Let $\alpha\colon G\to\Aut(A)$ be an action of a
second countable compact group $G$ 
with the continuous Rokhlin property on a unital, separable
\ca\ $A$. If $A$ satisfies the UCT, then 
so do $A\rtimes_\alpha G$ and $A^\alpha$.
%Then the following are equivalent:
%\be\item $A$ satisfies the UCT.
%\item The crossed product $A\rtimes_\alpha G$ satisfies the UCT.
%\item The fixed point algebra $A^\alpha$ satisfies the UCT.\ee
\end{thm}
\begin{proof} 
Note that $A\rtimes_\alpha G$ satisfies the UCT if and only if 
so does $A^\alpha$,
by~Corollary~3.12 in~\cite{Gar_classificationI_2014}. 
Assume that $A$ satisfies the UCT; we will show that so does
$A^\alpha$.

Let $B$ be a separable \ca. 
%Given a separable \ca\ $B$,
%there is a short exact sequence
%$$\xymatrix@=1.35em{0\ar[r] &
%\mathrm{Ext}(K_\ast(A),K_{\ast+1}(B))\ar[r]^-{\varepsilon_{A,B}}&
%KK(A,B)\ar[r]^-{\tau_{A,B}}&\mathrm{Hom}(K_\ast(A),K_\ast(B))\ar[r]&0,}$$
%which is natural on both variables. Denote by $\iota\colon A^\alpha\to A$ the canonical inclusion. 
Let $\Psi\colon A\rightrightarrows A^\alpha$ be a unital completely positive
asymptotic morphism as in the conclusion of \autoref{thm: existence of a.m.}. Then $\Psi$ induces group homomorphisms
\begin{align*} \Ext(K_\ast(A^\alpha),K_{\ast+1}(B)) &\to \Ext(K_\ast(A),K_{\ast+1}(B))\\
KK(A^\alpha,B)&\to KK(A,B)\\
\Hom(K_\ast(A^\alpha),K_\ast(B))&\to \Hom(K_\ast(A),K_\ast(B)),\end{align*}
which we will all denote by $\Psi^*$, that are right inverses of the canonical homomorphisms induced by $\iota\colon A^\alpha \to A$ (which we will all denote by $\iota^*$).
The diagrams
\begin{align*} \xymatrix{ \Ext(K_\ast(A),K_{\ast+1}(B)) \ar@/^1pc/[d]^-{\iota^*}&& \ker(\tau_{A,B})\ar[ll]_-{\mu_{A,B}}\ar@/^1pc/[d]^-{\iota^*}\\
 \Ext(K_\ast(A^\alpha),K_{\ast+1}(B))\ar@/^1pc/[u]^-{\Psi^*}& &\ker(\tau_{A^\alpha,B})\ar@/^1pc/[u]^-{\Psi^*}\ar[ll]^-{\mu_{A^\alpha,B}}}\end{align*}
and

\begin{align*} \xymatrix{ KK(A,B) \ar[rr]^-{\tau_{A,B}}\ar@/^1pc/[d]^-{\iota^*}&& \Hom(K_\ast(A),K_\ast(B))\ar@/^1pc/[d]^-{\iota^*}\\
KK(A^\alpha,B) \ar[rr]^-{\tau_{A^\alpha,B}}\ar@/^1pc/[u]^-{\Psi^*}& &\Hom(K_\ast(A^\alpha),K_\ast(B))\ar@/^1pc/[u]^-{\Psi^*}}\end{align*}
are easily seen to be commutative, using naturality of the horizontal maps involved. 

We claim that $\mu_{A^\alpha,B}$ is an isomorphism. Since
$\Psi^*\circ \mu_{A^\alpha,B}=\mu_{A,B}\circ\Psi^*$
and $\Psi^*,\mu_{A,B}$ and $\Psi^*$ are injective, it follows that $\mu_{A^\alpha,B}$ is injective. Surjectivity follows similarly from the identity
$ \mu_{A^\alpha,B}\circ\iota^*=\iota^*\circ \mu_{A,B}$
and the fact that $\iota^*,\mu_{A,B}$ and $\iota^*$ are surjective. The claim is proved.\\
\indent We now claim that $\tau_{A^\alpha,B}$ is surjective. Given $x\in \Hom(K_\ast(A^\alpha),K_\ast(B))$, use surjectivity of $\tau_{A,B}$
to choose $y\in KK(A,B)$ such that $\tau_{A,B}(y)=\Psi^*(x)$. Then
$$(\tau_{A^\alpha,B}\circ\iota^*)(y)=(\iota^*\circ \tau_{A,B})(y)=x,$$
showing that $\tau_{A^\alpha,B}$ is surjective. This proves the claim, and also the theorem.\end{proof}

\begin{rem} Adopt the notation of the theorem above. It is clear that the same argument, verbatim, shows that if
$A$ satisfies the $E$-theoretic version of the UCT, then so do $A^\alpha$ and $A\rtimes_\alpha G$.\end{rem}

It is unclear whether the converse to \autoref{thm: UCTcRp} holds in
general, namely if the UCT for $A^\alpha$ implies the UCT for $A$. 
For \emph{circle} actions, this is always the case:

\begin{cor}
Let $\alpha\colon \T\to\Aut(A)$ be an action with the 
continuous Rokhlin property on a unital, separable \ca\ $A$.
Then the following are equivalent:
\be\item $A$ satisfies the UCT.
\item The crossed product $A\rtimes_\alpha \T$ satisfies the UCT.
\item The fixed point algebra $A^\alpha$ satisfies the UCT.\ee
\end{cor}
\begin{proof} 
It suffices to show that $A$ satisfies the UCT whenever $A^\alpha$ does.
Since $\alpha$ has the Rokhlin property, by Theorem~3.11 in~\cite{Gar_classificationI_2014} there exists an automorphism
$\check{\alpha}\in\Aut(A^\alpha)$ such that $A\cong A^\alpha\rtimes_{\check{\alpha}}\Z$. Thus $A$ satisfies the UCT if $A^\alpha$
does.
\end{proof}

\section{Equivariant \texorpdfstring{$KK$}{KK}-theory for circle actions}
In this section, we specialize to circle actions, since much more can 
be said in this setting. For example, circle actions with the \cRp\ are 
always dual actions, and it can be completely characterized what 
automorphisms arise as (pre)duals of continuous Rokhlin actions; 
see \autoref{thm:duality}. Such automorphisms are in particular 
always trivial in $KK$-theory. In particular, we show that 
if $\alpha\colon\T\to\Aut(A)$ has the \cRp, then $(A,\alpha)$ is
$KK^\T$-equivalent to 
$(C(\T)\otimes A^\alpha, \texttt{Lt}\otimes\id_{A^\alpha})$. As a 
consequence, there are isomorphisms $K_0(A)\cong K_1(A)\cong K_0(A^\alpha)\oplus K_1(A^\alpha)$. 

We begin by restating \autoref{df: cRpG} in terms of unitaries in the 
algebra.

\begin{rem}\label{def cRpT} Let $A$ be a \uca, and let $\alpha\colon\T\to\Aut(A)$ be an action. Then $\alpha$ has the \emph{continuous \Rp} if
and only if 
there exists a continuous path $(u_t)_{t\in[0,\I)}$ of unitaries in $A$ such that
\be\item $\lim\limits_{t\to\I}\sup\limits_{\zeta\in\T}\|\alpha_\zt(u_t)-\zt u_t\|= 0$,
\item $\lim\limits_{t\to\I}\|u_ta-au_t\|= 0$ for all $a\in A$.\ee
\end{rem}

%\begin{rem} Condition (2) in \autoref{def cRp} is satisfied for all $a\in A$ if and only if it is satisfied
%for all elements of some generating set.
%\end{rem}

The next result strengthens condition~(1) in~\autoref{def cRpT}, and will make a number of arguments technically easier. 

\begin{prop}\label{can replace estimate by equality in def of cRp}
Let $A$ be a \uca\ and let $\alpha\colon \T\to\Aut(A)$ be an action. Then
$\alpha$ has the continuous \Rp\ \ifo there exists a continuous path $(u_t)_{t\in [0,\I)}$ of unitaries in $A$ such that
\be\item $\alpha_\zeta(u_t)=\zeta u_t$ for all $\zeta\in\T$ and all $t\in[0,\I)$, and
\item $\lim\limits_{t\to\I}\|u_ta-au_t\|=0$ for all $a\in A$.\ee\end{prop}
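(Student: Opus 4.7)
The ``if'' direction is immediate, since exact equality $\af_\zt(u_t) = \zt u_t$ in (1) above is strictly stronger than the approximate condition (1) in Definition \ref{def cRp}, while (2) is identical in both.

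For the ``only if'' direction, my plan is to take a path $(u_t)_{t\in[1,\I)}$ satisfying Definition \ref{def cRp} and perturb it in two steps: first, spectral averaging to obtain exact equivariance at the cost of unitarity; second, polar decomposition to recover unitarity without breaking the equivariance. I would begin by defining the first spectral component
\[
v_t = \int_{\T} \overline{\zt}\,\af_\zt(u_t)\,d\zt \in A,
\]
as a Bochner integral. A routine change of variables gives $\af_\eta(v_t) = \eta v_t$ for every $\eta\in\T$, which is exactly condition (1). Continuity of $t\mapsto v_t$ follows from the contractive estimate $\|v_s - v_t\| \le \|u_s - u_t\|$, and
\[
\|v_t - u_t\| = \left\|\int_{\T} \overline{\zt}\,(\af_\zt(u_t) - \zt u_t)\,d\zt\right\| \le \sup_{\zt\in\T}\|\af_\zt(u_t) - \zt u_t\|
\]
tends to $0$ by hypothesis. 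In particular, $v_t^*v_t$ converges to $1$ in norm, so beyond some $T_0$ it will be invertible.

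Next I would unitarize by polar decomposition: set $w_t = v_t\,|v_t|^{-1}$ for $t \ge T_0$. The key observation making this work is that $v_t^*v_t$ is $\af$-fixed, because $\af_\zt(v_t^*v_t) = \overline{\zt}v_t^* \cdot \zt v_t = v_t^*v_t$, and hence by continuous functional calculus so are $|v_t|$ and $|v_t|^{-1}$. This immediately gives $\af_\zt(w_t) = \af_\zt(v_t)\,\af_\zt(|v_t|^{-1}) = \zt v_t\,|v_t|^{-1} = \zt w_t$, so exact equivariance is preserved. Continuity of $t\mapsto w_t$ on $[T_0,\I)$ follows from continuity of inversion and of the square root on invertible positives close to $1$. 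Since $\|w_t - v_t\| = \|v_t(|v_t|^{-1} - 1)\| \to 0$, one obtains $\|w_t - u_t\| \to 0$, and the triangle inequality
\[
\|w_t a - a w_t\| \le 2\|a\|\,\|w_t - u_t\| + \|u_t a - a u_t\|
\]
delivers condition (2) for $(w_t)$. To conclude, I would extend the path from $[T_0,\I)$ to $[1,\I)$ by the constant value $w_{T_0}$ on $[1,T_0]$; this preserves both continuity and equivariance, and leaves the asymptotic behavior at infinity unchanged.

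The main point to pin down is that exact equivariance must survive the unitarization, which is precisely where the $\af$-invariance of the self-adjoint modulus $|v_t|$ is essential; everything else in the argument is quantitative and routine.
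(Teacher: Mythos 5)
Your proposal is correct and follows essentially the same route as the paper: average $\overline{\zeta}\alpha_\zeta(u_t)$ over $\T$ to get exact equivariance, observe that the result is close to a unitary, and unitarize via $x_t(x_t^*x_t)^{-1/2}$, with continuity coming from the contractive estimate on the averaging map. The only cosmetic difference is that the paper normalizes the path at the outset (WLOG $\|\alpha_\zeta(u_t)-\zeta u_t\|<\tfrac{1}{3}$ for all $t$) instead of restricting to $t\geq T_0$ and extending by a constant.
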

\begin{proof}
Choose a path $(v_t)_{t\in [0,\I)}$
of unitaries in $A$ as in \autoref{def cRpT}. Without loss of generality, we may assume that $\|\alpha_\zt(v_t)-\zt v_t\|<\frac{1}{4}$
for all $\zt\in \T$ and all $t\in [0,\I)$. Denote by $\mu$ the normalized Haar measure on $\T$, and for $t\in [0,\I)$, set
$$x_t =\int_{\T}\overline{\zeta}\alpha_\zeta(v_t)\ d\mu(\zeta).$$
Given $t\in [0,\I)$, one checks that $\|x_t\|\leq 1$ and $\|x_t-v_t\|\leq \frac{1}{3}$. Thus $\|x_t^*x_t-1\|< 1$, so $x_t^*x_t$ is
invertible. Set $u_t=x_t(x_t^*x_t)^{-\frac{1}{2}}$, which is a unitary in $A$.\\
\indent For $\zt\in \T$ and $t\in [0,\I)$, it is immediate to check that $\alpha_\zt(x_t)=\zt x_t$, and thus $\alpha_\zt(u_t)=\zt u_t$.
An application of the triangle inequality shows that $\lim\limits_{t\to\I}\|u_ta-au_t\|=0$ for all $a\in A$. Finally,
$$\|x_t-x_s\|= \left\|\int_{\T}\overline{\zeta}\alpha_\zeta(v_t-v_s)\ d\mu(\zeta)\right\|\leq \|v_t-v_s\|$$
for all $t$ and $s\in [0,\I)$, which shows that the map $t\mapsto x_t$ is continuous. This proves that $t\mapsto u_t$ is also continuous,
and hence $(u_t)_{t\in [0,\I)}$ is the desired path. \end{proof}

Let $\alpha\colon\T\to\Aut(A)$ be an action with the \cRp. 
Since $\alpha$ in particular has the \Rp, it 
follows from Theorem~3.11 in~\cite{Gar_classificationI_2014} that $\alpha$
is a dual action, that is, there 
exists a \emph{predual automorphism} $\check{\alpha}\in \Aut(A^\alpha)$
such that $(A,\alpha)$ is equivariantly isomorphic to 
$(A^\alpha\rtimes_{\check{\alpha}}\Z, \widehat{\check{\alpha}})$.
In \autoref{thm:duality}, we completely characterize those automorphisms
that arise as preduals of continuous Rokhlin actions, using the following notion.

%$$\varphi\colon   \to A$$
%such that $\alpha_\zeta=\varphi\circ\widehat{\check{\alpha}}_\zeta\circ\varphi^{-1}$ for all $\zeta\in\T$.

\begin{df}\label{def car} Let $B$ be a \ca\ and let $\beta$ be an automorphism of $B$.
We say that $\beta$ is \emph{asymptotically
representable} if there exists a continuous path $(v_t)_{t\in[0,\I)}$ of contractions in $B$ satisfying the following
conditions
\be
\item $\lim\limits_{t\to\I}\|v_t^*v_tb-b\|=0$ for all $b\in B$;
\item $\lim\limits_{t\to\I}\|v_t^*v_t-v_tv_t^*\| = 0$;
\item $\lim\limits_{t\to\I}\|\beta(v_t)-v_t\|=0$; and
\item $\lim\limits_{t\to\I}\|\beta(b)-v_tbv_t^*\| = 0$ for all $b\in B$.
\ee
\end{df}

\begin{rem} When $B$ is unital and $\beta\in \Aut(B)$ is asymptotically representable, one can use functional calculus to show
that the asymptotically normal contractions $v_t$ in the definition above can be chosen to be \emph{unitaries}.
\end{rem}

%\begin{rem} Let $B$ be a \ca\ and let $\beta$ be an automorphism of $B$. One can easily show that $\beta$ is asymptotically
%representable if and only if there exists a unitary $v$ in
%$$C_b([0,\I),M(B)^\beta)/C_0([0,\I),M(B)^\beta)$$
%such that $\beta(b)=vbv^*$ for all $b\in B$. We leave the proof as an exercise for the reader. We point out that one does not need to
%assume the \ca\ $B$ to be separable, unlike in the case of approximately representable automorphisms. \end{rem}

We fix some notation that will be used later.

\begin{rem}\label{rem:ConvProduct}
Endow $\T$ with its normalized Haar measure.
We endow $L^1(\T)$ with the convolution product, and regard it
as a dense subalgebra of $C^*(\T)$. More generally,
if $\alpha\colon\T\to\Aut(A)$ is a circle action on a \uca\ $A$,
we endow $L^1(\T,A)$ with the operations
\[(\xi\ast\eta)(\zeta)=\int_\T \xi(\omega)\alpha_\omega(\eta(\omega^{-1}\zeta))\ d\omega \ \ \mbox{ and } \ \ \xi^*(\zeta)=\alpha_\zeta(\xi(\overline{\zeta})^*)\]
for all $\xi,\eta\in L^1(\T,A)$ and all $\zeta\in\T$. 
Then $L^1(\T,A)$ is naturally a dense
$\ast$-subalgebra of $A\rtimes_\alpha\T$, and the 
$L^1$-norm on $L^1(\T,A)$ dominates the $C^*$-norm. 
There is a canonical inclusion
$C^*(\T)\subseteq A\rtimes_\alpha \T$, 
and any approximate identity for $C^*(\T)$ is also an approximate
identity for $A\rtimes_\alpha\T$.

Given $f\in L^1(\T)$ and 
$a\in A$ we write $fa\in L^1(\T,A)$ for the function 
given by $(fa)(\zeta)=f(\zeta)a$ for all $\zeta\in\T$.
The linear span of the 
elements of this form is dense in $L^1(\T,A)$ and
hence also in $A\rtimes_\alpha\T$. Finally, the dual automorphism
$\widehat{\alpha}\in\Aut(A\rtimes_\alpha\T)$ is determined by 
$\widehat{\alpha}(fa)(\zeta)=\zeta f(\zeta)a$ for all 
$\zeta\in\T$. 
\end{rem}

We proceed to show that asymptotic representability is the notion dual to the continuous \Rp.

%, in complete analogy with the duality between
%the \Rp\ and approximate representability. 
%Recall that if $A$ is a unital \ca\ and $G$ is a locally compact group acting on $A$, then
%the crossed product $A\rtimes G$ is unital if and only if $G$ is discrete. When $A$ is unital, there is always a canonical inclusion
%$C^*(G)\subseteq A\rtimes G$, and any approximate unit in $C^*(G)$ is also an approximate unit in $A\rtimes G$.

\begin{thm}\label{thm:duality}
Let $A$ be a \uca, let $\alpha\colon\T\to\Aut(A)$ be an action, and let $\beta\in \Aut(A)$ be an automorphism.
\be
\item The action $\alpha$ has the \cRp\ if and only if $\widehat{\alpha}\in\Aut(A\rtimes_\alpha \T)$ is asymptotically representable.
\item The automorphism $\beta$ is asymptotically representable if and only if $\widehat{\beta}\colon \T\to\Aut(A\rtimes_\beta\Z)$ has the
continuous Rokhlin property.
\ee
\end{thm}
\begin{proof}
(1) Assume that $\alpha$ has the \cRp, and let $(u_t)_{t\in [0,\I)}$ be a continuous path of unitaries as in
\autoref{can replace estimate by equality in def of cRp}. 
Let $\mu$ denote the normalized Haar measure on $\T$.
For $t\in [0,\I)$, let $K_t\subseteq \T$ denote the compact
symmetric 
neighborhood of the identity with $\mu(K_t)=1/(t+1)$, and 
let $f_t\colon \T\to \C$ be the 
positive function with $\supp(f_t)=K_t$ and $f_t(1)=2(t+1)$, 
and is otherwise linear. Since $f_t$ is integrable, we 
regard it as an element in 
$C^*(\T)\subseteq A\rtimes_\alpha\T$. We list some properties
of $(f_t)_{t\in [0,\I)}$ that will be used in the sequel:
\be\item[(a)] $(f_t\ast f_t)_{t\in [0,\I)}$ is an approximate unit for $A\rtimes_\alpha\T$ (because $(f_t)_{t\in [0,\I)}$ is);
\item[(b)] $\|f_t\|_1=1=\|f_t\ast f_t\|_1$ for all $t\in [0,\I)$;
\item[(c)] $f_t$ is symmetric: $f_t(\zeta)=\overline{f_t(\overline{\zeta})}$ for all 
$\zeta\in\T$;
\item[(d)] With $\widetilde{f}_t(\zeta)=\zeta f_t(\zeta)$ for all
$\zeta\in\T$, we have 
\[\lim_{t\to\I}\|f_t-\widetilde{f}_t\|_1=0=
 \lim_{t\to\I}\|f_t\ast f_t-\widetilde{f}_t\ast \widetilde{f}_t\|_1;
\]
\item[(e)] Given $f\in C(\T)$, given $a\in A$, and $\ep>0$,
there exists $t_0\in [0,\I)$ such that 
\[\|\alpha_\omega(a)-a\|<\ep \ \ \mbox{ 
 and } \ \ |f(\sigma)-f(\zeta)|<\ep
\]
whenever $t\geq t_0$ and 
$\omega, \overline{\zeta}\sigma\omega\in \supp(f_t)=K_t$.
\ee 

For $t\in [0,\I)$, set $v_t=f_tu_t^*$, which in view
of the last comment in \autoref{rem:ConvProduct}
is a contraction in $A\rtimes_\alpha\T$. Moreover, the map $t\mapsto v_t$
is continuous. Let $\zeta\in\T$. 
Using that $\alpha_\zeta(u_t^*)=\overline{\zeta}u_t^*$
at the fourth step, we get
\[v_t^*(\zeta)=\alpha_\zeta(v_t(\overline{\zeta})^*)=\alpha_\zeta(\overline{f_t(\overline{\zeta})}u_t)
\stackrel{(c)}{=}
 f_t(\zeta)\alpha_\zeta(u_t)=
 \zeta f_t(\zeta)u_t.
\]

We proceed to check the conditions in \autoref{def car}. 
For $\zeta\in\T$, we have
\begin{align*}
(v_t^*\ast v_t)(\zeta)
&= \int_\T v_t^*(\omega)\alpha_\omega(v_t(\overline{\omega}\zeta))\ d\omega\\
&= \int_\T \omega f_t(\omega)u_t\alpha_\omega(f_t(\overline{\omega}\zeta)u_t^*)\ d\omega\\
&= \int_\T \omega f_t(\omega)u_tf_t(\overline{\omega}\zeta)\overline{\omega}u_t^*\ d\omega\\
&= (f_t\ast f_t)(\zeta).
\end{align*}
Thus $v_t^*\ast v_t=f_t\ast f_t$ for all $t\in [0,\I)$. 
Since $(f_t\ast f_t)_{t\in [0,\I)}$ is an approximate identity
for $A\rtimes_\alpha\T$ by (a), condition (1) is satisfied.
We turn to (2). For $\zeta\in\T$ and $t\in [0,\I)$, we have
\begin{align*}
(v_t\ast v_t^*)(\zeta)
&= \int_\T v_t(\omega)\alpha_\omega(v_t^*(\overline{\omega}\zeta))\ d\omega\\
&= \int_\T f_t(\omega)u_t^*\alpha_\omega(\overline{\omega}\zeta f_t(\overline{\omega}\zeta)u_t)\ d\omega\\
&= \int_\T f_t(\omega)u_t^*\overline{\omega}\zeta f_t(\overline{\omega}\zeta)\omega u_t\ d\omega\\
&= \int_\T \omega f_t(\omega)\overline{\omega}\zeta f_t(\overline{\omega}\zeta)\ 
d\omega\\
&= (\widetilde{f}_t\ast \widetilde{f}_t)(\zeta).
\end{align*}
Thus $v_t\ast v_t^*=\widetilde{f}_t\ast \widetilde{f}_t$. 
Since
$\|\cdot\|\leq \|\cdot\|_1$, it follows from (d) that 
$\widetilde{f}_t\ast \widetilde{f}_t$ is 
asymptotically equal to $f_t\ast f_t=v_t^*\ast v_t$ (in
the norm of $A\rtimes_\alpha\T$). 
Thus condition~(2) in \autoref{def car} is satisfied.
In order to check condition~(3), 
let $\zeta\in\T$. Then
\[\widehat{\alpha}(v_t)(\zeta)=\zeta f_t(\zeta) u_t^*=\widetilde{f}_t(\zeta)u_t^*.\]
Using the identity above at the second step, it follows that 
\[
0\stackrel{(d)}{=}\lim_{t\to\I}\|\widetilde{f}_t-f_t\|_1=\lim_{t\to\I}\|\widehat{\alpha}(v_t)-v_t\|_1\geq \lim_{t\to\I}\|\widehat{\alpha}(v_t)-v_t\|,
\]
thus verifying~(3). 
Finally, to check~(4), it suffices to 
take $b=fa$ for $f\in C(\T)\subseteq L^1(\T)$ and $a\in A$.
Let $\ep>0$. Using (e), find $t_0$ large enough so that
\[\|\alpha_\omega(a)-a\|<\frac{\ep}{2} \ \ \mbox{ 
 and } \ \ |f(\sigma)-f(\zeta)|<\frac{\ep}{2}
\]
whenever $t\geq t_0$ and 
$\omega, \overline{\zeta}\sigma\omega\in \supp(f_t)$.
Given
$\zeta\in\T$, we have
\begin{align*}(v_t\ast b \ast v_t^*)(\zeta)
&=\int_\T v_t(\omega)\alpha_\omega((b \ast v_t^*)(\overline{\omega}\zeta)) \ d\omega \\
&=\int_\T f_t(\omega)u_t^*\alpha_\omega\left( 
\int_\T 
f(\sigma)a \alpha_\sigma(v_t^*(\overline{\sigma\omega}\zeta)) 
\ d\sigma\right) d\omega \\
&=\int_\T\int_\T f_t(\omega)u_t^*f(\sigma)\alpha_\omega(a)
\alpha_{\omega\sigma 
}\left(\overline{\sigma\omega}\zeta
f_t(\overline{\sigma\omega}\zeta)u_t
\right) d\sigma d\omega \\
&=\int_\T\int_\T f_t(\omega)u_t^* 
f(\sigma)\alpha_\omega(a) 
\overline{\sigma\omega}\zeta
f_t(\overline{\sigma\omega}\zeta)
\omega\sigma u_t
\ d\sigma d\omega \\
&=\zeta\int_\T\int_\T f_t(\omega) 
f(\sigma)\alpha_\omega(a) 
f_t(\overline{\sigma\omega}\zeta)
\ d\sigma d\omega. 
\end{align*}
Combining the previous computation with the choice of $t_0$, 
we get the following
approximation in the norm of $A$ whenever $t\geq t_0$:
\begin{align*}
(v_t\ast b \ast v_t^*)(\zeta)&\approx_{\ep} 
\zeta f(\zeta)a\int_\T\int_\T f_t(\omega) 
f_t(\overline{\sigma\omega}\zeta)
\ d\sigma d\omega \\
&= \zeta b(\zeta)
\int_\T (f_t\ast f_t)(\overline{\sigma}\zeta)\ d\sigma\\
&= \zeta b(\zeta)\|f_t\ast f_t\|_1\stackrel{(b)}{=}
\widehat{\alpha}(b)(\zeta).\end{align*}
Since the bound is uniform on $\zeta\in\T$, for all $t\geq t_0$ it follows that 
\[\ep\geq \|v_t\ast b \ast v_t^*-\widehat{\alpha}(b)\|_1
 \geq \|v_t\ast b \ast v_t^*-\widehat{\alpha}(b)\|.
\]
Since $\ep>0$ is arbitrary, this verifies condition~(4),
and shows that $\widehat{\alpha}$ is asymptotically representable.

Conversely, assume that $\widehat{\alpha}$ is asymptotically representable, and let $(v_t)_{t\in [0,\I)}$ be
a continuous path in $A\rtimes_\alpha\T$ satisfying the conditions in \autoref{def car}.
Let $w\in M(A\rtimes_\alpha\T\rtimes_{\widehat{\alpha}}\Z)$ be the canonical unitary implementing $\widehat{\alpha}$,
and set $\widetilde{u}_t=v_t^*w$ for all $t\in [0,\I)$, which we regard as an element in $A\rtimes_\alpha\T\rtimes_{\widehat{\alpha}}\Z$.
Given $x\in A\rtimes_\alpha\T$ and $k\in\Z\setminus\{0\}$, we use  that $v_t$ asymptotically commutes with $w$ at
the second step (since $\lim\limits_{t\to\I}\|\widehat{\alpha}(v_t)-v_t\|=0$); that $w$ implements $\widehat{\alpha}$ at the second step; 
and the fact that $\widehat{\alpha}^{-1}$ is asymptotically implemented by $v_t^*$ at the third step, to get:
\begin{align*}\lim_{t\to\I} \widetilde{u}_t(xw^k)-(xw^k)\widetilde{u}_t&=\lim_{t\to\I} v_t^*wxw^k-xw^kv_t^*w\\
&=\lim_{t\to\I} v_t^*\widehat{\alpha}(x)w^{k+1}-xv_t^*w^{k+1}\\
&=\lim_{t\to\I} xv_t^*w^{k+1}-xv_t^*w^{k+1}=0. \end{align*}

Since the set $\{xw^k\colon x\in A\rtimes_\alpha\T, k\in\Z\setminus\{0\}\}$ generates $A\rtimes_\alpha\T\rtimes_{\widehat{\alpha}}\Z$
as a \ca, we conclude that $\widetilde{u}_t$ is asymptotically central. On the other hand, it is clear that
$\widehat{\widehat{\alpha}}_\zeta(\widetilde{u}_t)=\zeta\widetilde{u}_t$ for all $\zeta\in\T$ and all $t\in [0,\I)$.

Use Takai duality to identify $A\rtimes_\alpha\T\rtimes_{\widehat{\alpha}}\Z$ with $A\otimes\K(L^2(\T))$. Let
$p\in \K(L^2(\T))$ be the projection onto the constant functions, and let $e\in M(A\rtimes_\alpha\T\rtimes_{\widehat{\alpha}}\Z)$
be the projection corresponding to $1_A\otimes p$. Then $\widehat{\widehat{\alpha}}_\zeta(exe)=e\widehat{\widehat{\alpha}}_\zeta(x)e$
for all $x\in A\rtimes_\alpha\T\rtimes_{\widehat{\alpha}}\Z$, and there is a canonical equivariant isomorphism
\[\left(e(A\rtimes_\alpha\T\rtimes_{\widehat{\alpha}}\Z)e,\widehat{\widehat{\alpha}}\right)\cong (A,\alpha).\]

For $t\in [0,\I)$, set $u_t=e\widetilde{u}_te$, which we regard as an element in $A$. Then
$t\mapsto u_t$ is a continuous path in $A$, and $\alpha_\zeta(u_t)=\zeta u_t$ for all $\zeta\in\T$ and all $t\in [0,\I)$.
Moreover,
\[\lim_{t\to\I}u_tau_t^*=\lim_{t\to\I}e\widetilde{u}_teae\widetilde{u}_t^*e=e(eae)e=a\]
for all $a\in A$. In particular, putting $a=1_A$ we get $\lim\limits_{t\to\I}u_t^*u_t=1_A$, and similarly for $u_tu_t^*$.
Using functional calculus, we can perturb $u_t$ to a nearby continuous path of unitaries in $A$ which satisfy
the conditions in \autoref{def cRpT}. This shows that $\alpha$ has the continuous \Rp.

(2) Assume that $\beta$ is asymptotically representable. Let $(v_t)_{t\in [0,\I)}$ be a continuous path of unitaries in $A$
satisfying
$\lim\limits_{t\to\I}\|\beta(a)-v_tav_t^*\|=0$ for all $a\in A$ and $\lim\limits_{t\to\I}\|\beta(v_t)-v_t\|=0$.
Denote by $w$ the canonical unitary in $A\rtimes_\beta\Z$ that implements $\beta$. For $t\in [0,\I)$, set $u_t=v_t^*w$, which
is a unitary in $A\rtimes_\beta\Z$. Moreover, for $\zt\in \T$ we have $\widehat{\beta}_\zt(u_t)=\zt u_t$, so condition (1)
of \autoref{def cRpT} is satisfied for $\widehat{\beta}$ with $(u_t)_{t\in [0,\I)}$. To check condition (2), it is enough
to consider elements in $A\cup\{w\}$. For $a\in A$, we have
\[\lim\limits_{t\to\I}u_tau_t^*=\lim\limits_{t\to\I}v_t^*waw^*v_t=\lim\limits_{t\to\I}v_t^*\beta(a)v_t = \beta^{-1}(\beta(a))=a,\]
and hence $\lim\limits_{t\to\I}\|u_ta-au_t\|=0$, as desired. Finally,
\[\lim\limits_{t\to\I}\|u_twu_t^*-w\|=\lim\limits_{t\to\I}\|v_t^*wv_t-w\|=\lim\limits_{t\to\I}\|wv_tw^*-v_t\|=\lim\limits_{t\to\I}\|\beta(v_t)-v_t\|=0.\]
We conclude that $\widehat{\beta}$ has the continuous Rokhlin property.

Conversely, assume that $\widehat{\beta}$ has the continuous Rokhlin property. Use \autoref{can replace estimate by equality in def of cRp}
to choose a continuous path $(u_t)_{t\in [0,\I)}$ of unitaries in $A\rtimes_\beta\Z$ such that
\bi\item $\widehat{\beta}_\zt(u_t)=\zeta u_t$ for all $\zt\in \T$ and all $t\in [0,\I)$;
\item $\lim\limits_{t\to\I}\|u_tb-bu_t\|=0$ for all $b\in A\rtimes_\beta\Z$.\ei

We continue to denote by $w$ the canonical unitary in $A\rtimes_\beta\Z$ that implements $\beta$.
For $t\in [0,\I)$, set $v_t=wu_t^*$, which is a unitary in $A\rtimes_\beta\Z$. For $\zt\in \T$,
we have $\widehat{\beta}_\zt(v_t)=v_t$, so $v_t$ belongs to $(A\rtimes_\beta\Z)^{\widehat{\beta}}$, which equals $A$
by Proposition~7.8.9 in~\cite{Ped_algebras_1979}. 
Moreover, for $a\in A$, we have
\[\lim\limits_{t\to\I}\|v_tav_t^*-\beta(a)\|=\lim\limits_{t\to\I}\|wu_t^*au_tw^*-waw^*\|=\lim\limits_{t\to\I}\|u_t^*au_t-a\|= 0.\]
Additionally, $\lim\limits_{t\to\I}\|\beta(v_t)-v_t\|=\lim\limits_{t\to\I}\|w(wu_t^*)w^*-wu_t^*\|=0$, because $u_t$ asymptotically commutes with the canonical unitary $w\in A\rtimes_\beta\Z$.

It follows that the continuous path $(v_t)_{t\in [0,\I)}$ of unitaries in $A$ satisfies the conditions of \autoref{def car},
and hence $\beta$ is asymptotically representable.
\end{proof}

\autoref{thm:duality} can be used to produce interesting examples
of actions with the \cRp. For example, if $(u_n)_{n\in\N}$ is any
sequence of unitaries in the CAR algebra $M_{2^\I}$ coming from 
matrix subalgebras, then 
$\varphi=\bigotimes\limits_{n\in\N}\Ad(u_n)$ defines an asymptotically
representable automorphism of $M_{2^\I}$.
Hence its dual
action $\alpha=\widehat{\varphi}\colon\T\to\Aut(M_{2^\I}\rtimes_\varphi\Z)$
has the \cRp. 

Recall that an automorphism $\varphi$ of a unital \ca\ $A$ is 
said to be
\emph{asymptotically inner} if there exists a continuous path $(u_t)_{t\in [0,\I)}$ of unitaries in $A$ such that $\varphi(a)=\lim\limits_{t\to\I}u_tau_t^*$ for all $a\in A$. Since asymptotically
representable automorphisms are clearly asymptotically inner,
we deduce the following useful consequence of 
\autoref{thm:duality}:

\begin{cor}\label{cor:PredualAsymptInner}
Let $\alpha\colon \T\to\Aut(A)$ be an action 
with the continuous Rokhlin property on a unital
\ca\ $A$. Then $\widehat{\alpha}$ and $\check{\alpha}$
are asymptotically inner, and in particular induce the trivial
element in $KK$-theory.
\end{cor}

\begin{df}\label{df:UniKKEq}
Let $\alpha\colon\T\to\Aut(A)$ and $\beta\colon \T\to\Aut(B)$
be circle actions on unital \ca s.
We say that $(A,\alpha)$ and $(B,\beta)$ are \emph{unitally $KK^\T$-equivalent},
if there exists an invertible element $\eta\in KK^\T(A,B)$
with $[1_A]\times \eta=[1_B]\in K_0^\T(B)$
\footnote{In the expression $[1_A]\times \eta$ we use the 
Kasparov product
$KK^\T(\C,A)\times KK^\T(A,B)\to KK^\T(\C,B)$, 
once we identify $K_0^\T(A)$, where $[1_A]$ naturally belongs,
with $KK^\T(\C,A)$, and similarly for $B$.}. 
In this case, we call $\eta$ a
\emph{unital $KK^\T$-equivalence}.
\end{df}

We close this section by showing that circle actions with the 
\cRp\ are always unitally $KK^\T$-equivalent to a (trivial) 
amplification of $\texttt{Lt}$. 

\begin{prop}\label{prop:KKTequivalenceCTAalpha}
Let $\alpha\colon \T\to\Aut(A)$ be an action 
with the continuous Rokhlin property on a unital, separable
\ca\ $A$. 
Then $(A,\alpha)$ is unitally $KK^\T$-equivalent to $(C(\T)\otimes A^\alpha,
\texttt{Lt}\otimes\id_{A^\alpha})$. In particular,
$K_0(A)\cong K_1(A)$ and there is an isomorphism
$K_0(A)\cong K_0(A^\alpha)\oplus K_1(A^\alpha)$ satisfying
$[1_A]\mapsto ([1_{A^\alpha}],0)$.
\end{prop}
\begin{proof}
Let $\Upsilon\colon C(\T,A)\rightrightarrows A$ be the unital
equivariant asymptotic morphism constructed in \autoref{thm: existence of a.m.}. Denote by $\iota\colon A^\alpha\to A$ the canonical inclusion. 
Set $(B,\beta)=(C(\T, A^\alpha),\texttt{Lt}\otimes\id_{A^\alpha})$ and 
\[\eta=[\Upsilon\circ ( \id_{C(\T)}\otimes \iota)]\in 
KK^\T(B,A).\] 
We claim that 
$\eta$ is a unital $KK^\T$-equivalence. Note that unitality
is guaranteed by the fact that $\iota$ and $\Upsilon$ are
unital, so it remains to prove that $\eta$ is a 
$KK^\T$-equivalence. Let 
$\eta\rtimes\T\in KK^{\Z}(B\rtimes_\beta \T, A\rtimes_\alpha\T)$
denote the class induced by $\eta$; see 
Theorem~3.11 in~\cite{Kas_equivariant_1988}.
By the Baaj-Skandalis duality (see Corollary~6.21 in \cite{BaaSka_algebres_1989}),
the assignment $\eta\mapsto \eta\rtimes\T$ defines a natural isomorphism
$KK^\T(B,A)\cong KK^{\Z}(B\rtimes_\beta \T, A\rtimes_\alpha\T)$,
and thus it suffices to check
that $\eta\rtimes \T$ is a $KK^\Z$-equivalence. 
Moreover, by the second
paragraph on page 287 of~\cite{NesMey_homological_2010}, it
is enough to prove that $\eta\rtimes \T$ is a $KK$-equivalence
between $B\rtimes_\beta\T=C(\T,A^\alpha)\rtimes \T\cong A^\alpha\otimes\K(L^2(\T))$ and $A\rtimes_\alpha\T$.

Write $e\in L^1(\T)\subseteq C^*(\T)$ for the constant
function with value 1, and
and write $\delta_0\in c_0(\Z)$ for the Dirac function at
$0\in\Z$. 
Note that the Gelfand transform $C^*(\T)\cong c_0(\Z)$
maps $e$ to $\delta_0$.
Identifying $(A,\alpha)$ with $(A^\alpha\rtimes_{\check{\alpha}}\Z,\widehat{\check{\alpha}})$, 
denote by $\mathcal{T}\colon A\rtimes_\alpha\T\to A^\alpha\otimes\K(\ell^2(\Z))$ the isomorphism given by Takai duality.
Since 
$\Upsilon(1_{C(\T)}\otimes \iota(a))=\iota(a)$ for all $a\in A^\alpha$, the
following diagram is commutative:
\begin{align*}
\xymatrix{
&& A^\alpha\otimes \K(L^2(\T))\ar@<-.5ex>[d] \ar@<.5ex>[d]^{\eta\rtimes\T}\\
A^\alpha\ar[urr]^-{\id_{A^\alpha}\otimes e \ } 
\ar[drr]_-{\id_{A^\alpha}\otimes \delta_0 \ \ }
&& A\rtimes_\alpha\T \ar[d]^-{\mathcal{T}}\\
&& A^\alpha\otimes\K(\ell^2(\Z)).
}
\end{align*}
Since $\id_{A^\alpha}\otimes e$, $\id_{A^\alpha}\otimes \delta_0$ and $\mathcal{T}$ all induce $KK$-equivalences,
it follows that $[\eta\rtimes\T]$ is also a $KK$-equivalence.
By the preceeding comments, it follows that $\eta$ is a 
$KK^\T$-equivalence, finishing the proof.
\end{proof}

We deduce that unital $KK^\T$-equivalence for $\T$-actions with the \cRp\ is equivalent to unital $KK$-equivalence
of the fixed point algebras:

\begin{cor}\label{cor:KKTequivKKeq}
Let $\alpha\colon\T\to\Aut(A)$ and $\beta\colon\T\to\Aut(B)$
be actions with the \cRp\ on unital, separable \ca s $A$ and 
$B$. Then $(A,\alpha)\sim_{KK^{\T}}(B,\beta)$ unitally
if and only if $A^\alpha\sim_{KK}B^\beta$ unitally. 
\end{cor}
\begin{proof} 
Assume that $(A,\alpha)\sim_{KK^{\T}}(B,\beta)$ unitally.
By taking crossed products, there is a $KK$-equivalence
$A\rtimes_\alpha \T\sim_{KK} B\rtimes_{\beta}\T$.
Since $\alpha$ is itself a dual action, it follows 
from Takai duality that
$A\rtimes_\alpha\T\cong A^\alpha\otimes\K(\ell^2(\Z))$
canonically, and similarly for $\beta$. Moreover, 
unitality
of the original $KK^\T$-equivalence implies 
that the induced $KK$-equivalence
\[A^\alpha\otimes\K(\ell^2(\Z))\sim_{KK}
 B^\alpha\otimes\K(\ell^2(\Z))
\]
can be restricted to suitable corners to get a 
unital $KK$-equivalence $A^\alpha\sim_{KK}B^\beta$, as desired.

Conversely, a unital $KK$-equivalence $A^\alpha\sim_{KK}B^\beta$
can be tensored with the identity on $C(\T)$ to obtain a
unital $KK$-equivalence $C(\T,A^\alpha)\sim_{KK}C(\T,B^\beta)$.
This $KK$-class is equivariant with respect to 
$\texttt{Lt}\otimes\id_{A^\alpha}$ and 
$\texttt{Lt}\otimes\id_{B^\beta}$, and thus determines a
unit-preserving $KK^\T$-class
\[\xi\in KK^\T\left((C(\T,A^\alpha),\texttt{Lt}\otimes\id_{A^\alpha}),(C(\T,B^\beta),\texttt{Lt}\otimes\id_{A^\alpha})\right).\]
Since $\xi$ is a $KK$-equivalence between $C(\T,A^\alpha)$
and $C(\T,B^\beta)$, it follows from 
the Baaj-Skandalis' duality and the Pimsner-Voiculescu
exact sequence in $KK$-theory (see specifically the comments 
on page 287 of~\cite{NesMey_homological_2010}) that $\xi$ 
is a $KK^\T$-equivalence.
Since $(C(\T,A^\alpha),\texttt{Lt}\otimes\id_{A^\alpha})$ is 
unitally $KK^\T$-equivalent to $(A,\alpha)$ by \autoref{prop:KKTequivalenceCTAalpha}, and similarly for 
$(B,\beta)$, it follows that $(A,\alpha)\sim_{KK^\T}(B,\beta)$
unitally. 
\end{proof}

We denote by $R(\T)$ the representation ring of $\T$, which is
naturally isomorphic to the ring $\Z[x,x^{-1}]$ of Laurent 
polynomials. Thus, an $R(\T)$-module is simply an abelian group
together with a distinguished automorphism (which is induced
by multiplication by $x$). If $\alpha\colon \T\to\Aut(A)$ is an
action on a \ca\ $A$, then the $\T$-equivariant $K$-theory
$K_\ast^\T(A,\alpha)$ is canonically an $R(\T)$-module: as a 
group, it is isomorphic to $K_\ast(A\rtimes_\alpha \T)$, and
the distinguished automorphism is induced by $\widehat{\alpha}$.

\begin{rem}
When $A$ satisfies the UCT, 
\autoref{prop:KKTequivalenceCTAalpha} can be easily obtained using
the results of Bentmann-Meyer \cite{BenMey_general_2017} on classification up to 
$KK^\T$-equivalence. Indeed, if $A$ satisfies the UCT, then $(A,\alpha)$ belongs to the $\T$-equivariant
bootstrap class thanks to \autoref{thm: UCTcRp} and Proposition~3.1 
in~\cite{BenMey_general_2017}. 
We use Theorem~2.6 in~\cite{BenMey_general_2017} with $\mathfrak{A}$ being the 
category of $\Z_2$-graded $R(\T)$-modules (where $K_\ast^\T(A,\alpha)$ naturally lives).
Set $B=C(\T)\otimes A^\alpha$ and $\beta=\texttt{Lt}\otimes\id_{A^\alpha}$. Then $\beta$ has the \cRp.
Since the equivariant $K$-theories of $\alpha$ and $\beta$ are 
canonically isomorphic to $K_\ast(A^\alpha)$,
it follows from Theorem~2.6 in~\cite{BenMey_general_2017}
that $\alpha\sim_{KK^\T}\beta$ if 
and only if the 
canonical classes that $\alpha$ and $\beta$ determine in 
$\mathrm{Ext}_{R(\T)}(K_\ast(A^\alpha),K_\ast(SA^\alpha))$ agree. 
On the other hand, the 
$R(\T)$-actions are trivial and hence there is an identification
\[\mathrm{Ext}_{R(\T)}(K_\ast(A^\alpha),K_\ast(SA^\alpha))\cong \mathrm{Ext}_{\Z}(K_\ast(A^\alpha),K_\ast(SA^\alpha)).\] 
Moreover, an inspection of the proof of 
Theorem~2.6 in~\cite{BenMey_general_2017} shows that the Ext-class that 
$\alpha$ induces is the one coming from the Pimsner-Voiculescu
exact sequence for its predual automorphism $\check{\alpha}$. 
Since $\check{\alpha}$ is $KK$-trivial by \autoref{thm:duality}, this 
Ext-class is trivial, and thus uniquely determined. We conclude that
$(A,\alpha)\sim_{KK^\T} (B,\beta)$.
\end{rem}

%Recall that a \emph{Kirchberg algebra} is a simple, 
%purely infinite, separable, nuclear \ca. 
%Denote by $\mathcal{S}$
%the (unique) Kirchberg algebra satisfying the UCT 
%which is $KK$-equivalent to $C_0(\mathbb{R})$; see Theorem~4.2.5 %in~\cite{Phi_classification_2000} for existence of
%$\mathcal{S}$, and see
%Theorem~4.2.1
%in~\cite{Phi_classification_2000} its uniqueness.
We isolate the following notion for later use.

\begin{df}\label{df:KKsymmetric}
We say that a \uca\ $A$ is \emph{(nuclearly) 
$KK$-symmetric} if there exist
a unital (nuclear) \ca\ $B$ such that $A\sim_{KK}C(\T,B)$
unitally.
\end{df}

Note that a $KK$-symmetric \ca\ $A$ satisfies
$A\sim_{KK} SA$, and therefore
$K_0(A)\cong K_1(A)$. A \ca\ satisfying the UCT is
nuclearly $KK$-symmetric if and only if 
$K_0(A)\cong K_1(A)$.
%The following 
%is an immediate consequence of \autoref{prop:KKTequivalenceCTAalpha}.

\begin{cor}\label{cor:KKsymmetric}
Let $\alpha\colon \T\to\Aut(A)$ be an action 
with the continuous Rokhlin property on a unital, separable
\ca\ $A$. Then $A$ is $KK$-symmetric. 
If $A$ is nuclear, then it is nuclearly $KK$-symmetric.
\end{cor}
\begin{proof}
This follows from 
\autoref{prop:KKTequivalenceCTAalpha}, since $A^\alpha$ is
nuclear whenever $A$ is. 
\end{proof}

For Kirchberg algebras, we will see in the following section 
that the converse to \autoref{cor:KKsymmetric} is also true.

\section{Classification theorems for circle actions on Kirchberg algebras}
In this section we continue to work with circle actions with the 
\cRp, 
and we prove results related to their classification 
in the case of 
Kirchberg algebras, that is, simple, purely infinite, separable
and nuclear \ca s.
Our results are as follows: a unital Kirchberg algebra 
admits a circle action with the \cRp\ if and only if it is 
nuclearly $KK$-symmetric 
(\autoref{thm: rangeInvCRp});
two circle actions on a unital Kirchberg algebra with the \cRp\ are 
conjugate if and only if they are unitally $KK^\T$-equivalent (\autoref{thm: ClassCRp}); any action with the \cRp\ is unitally $KK^\T$-equivalent to a (necessarily unique)
action on a unital Kirchberg algebra with the \cRp\ (\autoref{thm:EverycRpKKequivKirch}). We also
completely describe the possible values of the $\T$-equivariant $K$-theory
in this setting (\autoref{thm: rangeInvCRp}). 
The results in this section
are $\T$-equivariant versions of celebrated results of 
Kirchberg and Kirchberg-Phillips. 

In the following theorem, the fact that $KK^\T$-equivalence reduces to having isomorphic 
equivariant $K$-theory in the UCT case
is by no means obvious. Indeed, there is no ``equivariant UCT'', 
and there exist examples of circle actions on UCT algebras 
with isomorphic $K^\T$-theory which are not $KK^\T$-equivalent.
%In the following theorem, we use $\T$-equivariant $KK$-theory as the invariant. When the underlying algebra satisfies the UCT, 
%the invariant reduces to $\T$-equivariant $K$-theory.

%We refer the reader
%to \cite{Phi_equivariant_1987} for a thorough development of
%equivariant $K$-theory. 
%In particular, we denote by $R(G)$ the group
%ring associated to the compact group $G$, which is the set of all formal
%finite linear combinations of equivalence classes of irreducible %representations of $G$. For $G=\T$, we have $R(\T)\cong \Z[x,x^{-1}]$,
%where the action of $x$ on 
%$K_\ast^\alpha(A)\cong K_\ast(A\rtimes_\alpha\T)$ is given by $K_\ast(\widehat{\alpha})$.

Recall that two automorphisms $\varphi$ and $\psi$ of a unital \ca\ $A$ are said to be \emph{cocycle conjugate} if there exists $\theta\in\Aut(A)$ such that $\theta\circ\varphi\circ\theta^{-1}$ and $\psi$ are unitarily equivalent. An automorphism $\varphi$ of $A$ is said to be \emph{aperiodic}
if all of its (nonzero) powers are outer. 

\begin{thm}\label{thm: ClassCRp} 
Let $\alpha\colon \T\to\Aut(A)$ and $\beta\colon \T\to\Aut(B)$ be actions
with the continuous \Rp\ on unital Kirchberg 
algebras $A$ and $B$. 
Then the following are equivalent:
\be\item $(A,\alpha)$ and $(B,\beta)$ are conjugate;
\item $(A,\alpha)$ and $(B,\beta)$ are unitally $KK^\T$-equivalent;
\item $A^\alpha$ is isomorphic to $B^\beta$.\ee 
When $A$ and $B$
satisfy the UCT, the above are also equivalent to 
$(K_\ast^\T(A,\alpha),[1_A])\cong (K_\ast^\T(B,\beta),[1_B])$. 
\end{thm}
\begin{proof} 
It is clear that (1) implies (2).
By \autoref{cor:KKTequivKKeq}, (2) is equivalent to 
the existence of a unital  
$KK$-equivalence $A^\alpha\sim_{KK}B^\beta$.
Since both fixed point algebras are Kirchberg algebras by 
Theorem~6.3
in~\cite{Gar_classificationI_2014}, it follows
from Corollary 4.2.2 in \cite{Phi_classification_2000} that 
$A^\alpha\cong B^\beta$. This shows that (2) implies (3).

We prove that (3) implies (1). 
Denote by $\check{\alpha}$ and $\check{\beta}$ the predual automorphisms of $\alpha$ and $\beta$, respectively, given by Theorem~3.11
in~\cite{Gar_classificationI_2014}. 
Since $A^\alpha\cong B^\beta$, we regard $\check{\alpha}$ and $\check{\beta}$ as automorphisms of the same \ca. 
It follows from \autoref{thm:duality} that 
$\check{\alpha}$ is asymptotically unitarily equivalent
to the identity automorphism, and similarly for $\check{\beta}$.
In particular, $\check{\alpha}$ and $\check{\beta}$
are asymptotically unitarily equivalent.
Thus, the combination of Proposition~6.5 in~\cite{Gar_classificationI_2014} and Theorem~5 in \cite{Nak_aperiodic_2000} implies that
$\check{\alpha}$ and $\check{\beta}$ are cocycle conjugate, 
and thus $\alpha$ and $\beta$ are conjugate by Proposition~2.9 
in~\cite{Gar_classificationI_2014}, as desired.

We turn to the last part of the statement, so 
assume that $A$ and $B$ satisfy the UCT and that there is a 
graded
isomorphism 
$(K_\ast^\T(A,\alpha),[1_A])\cong (K_\ast^\T(B,\beta),[1_B])$. By Julg's theorem (see Theorem~11.7.1 in~\cite{Bla_Ktheory_1998})
in combination with the isomorphism $A\rtimes_\alpha \T\cong A^\alpha
\otimes\K(\ell^2(\Z))$ (and similarly for $\beta)$,
we deduce that $(K_\ast(A^\alpha),[1_{A^\alpha}])\cong (K_\ast(B^\beta),[1_{B^\beta}])$. Since
$A^\alpha$ and $B^\beta$ satisfy the UCT by \autoref{thm: UCTcRp},
it follows that there is a unital $KK$-equivalence 
$A^\alpha\sim_{KK}B^\beta$.
Thus $A^\alpha\otimes C(\T)\sim_{KK^\T} B^\beta\otimes C(\T)$ 
unitally and hence $(A,\alpha)\sim_{KK^\T} (B,\beta)$ by \autoref{prop:KKTequivalenceCTAalpha}. The result follows.
\end{proof}

\autoref{thm: ClassCRp} may be regarded as a \emph{uniqueness} theorem. %It states that two circle actions with the continuous \Rp\ on a unital
%Kirchberg algebra that satisfies the UCT, are conjugate whenever they have the same equivariant $K$-theory.
It is natural to ask for existence results, that is, for a complete
description of the range of the invariant. This is settled in \autoref{thm: rangeInvCRp}. We need some preparation.

\begin{rem}\label{rem:UnitPertAsRepr}
Let $B$ be a unital \ca, let $\varphi\in\Aut(B)$ be an
asymptotically representable automorphism, let $u$ be a unitary
in $B$, and set $\psi=\Ad(u)\circ\varphi$. Then $\psi$ is
asymptotically representable as well by \autoref{thm:duality}, 
since in this setting the 
dual actions $\widehat{\varphi}$ and $\widehat{\psi}$ are conjugate,
and thus $\widehat{\psi}$ has the \cRp.
\end{rem}

\begin{prop}\label{thm:AperOIAsRep}
Any aperiodic automorphism of $\OI$ is asymptotically representable.
%In particular, any Kirchberg algebra admits an aperiodic, asymptotically representable automorphism.
\end{prop}
\begin{proof}
Recall that any two automorphism of $\OI$ are 
asymptotically unitarily equivalent; see
Proposition~2.2.7 in~\cite{Phi_classification_2000}. 
In combination with Theorem~5 in~\cite{Nak_aperiodic_2000}, 
it follows that any two aperiodic automorphisms of $\OI$ are
cocycle conjugate. Since asymptotic representability and
aperiodicity are preserved by inner perturbations (the first 
one by \autoref{rem:UnitPertAsRepr}), it
suffices to find just \emph{one} asymptotically 
representable, aperiodic automorphism of $\OI$. 
Since $\OI$ is isomorphic to $\bigotimes_{n=0}^\I\OI$ by
part~(iii) of~Theorem~7.2.6
in~\cite{RorSto_classification_2002}, it suffices
to find an approximately representable, aperiodic 
automorphism of $\bigotimes_{n=0}^\I\OI$.

%The existence of such an
%automorphism can be deduced, for example, arguing as in Lemma~2.2 in~\cite{BraKisRob_rohlin_2007}.
We provide an explicit construction of such an automorphism.
Recall that $\OI$ has real rank zero 
(see 
Proposition~4.1.1 in~\cite{RorSto_classification_2002}).
For each $n\geq 1$, use Proposition~5.3
in~\cite{RobRor_divisibility_2013} (see also Definition~5.1 there) to 
fix a unital homomorphism $\rho_n\colon {M_n\oplus M_{n+1}}\hookrightarrow \OI$. Set $\widetilde{u}_1=1$ and for $n\geq 2$, 
set \begin{align*} \widetilde{u}_n=\left(
               \begin{array}{cccccc}
                 0 & 1 & 0 & \cdots & 0 & 0 \\
                 0 & 0 & 1 & \cdots & 0 & 0\\
                 0 & 0 & 0 & \ddots & 0 & 0 \\
                 \vdots & \vdots& \vdots & \ddots & \ddots & \vdots \\
                 0 & 0 & 0 & \cdots & 0 & 1 \\
                 1 & 0 & 0 & \cdots & 0 & 0 \\
               \end{array}
             \right) \in M_n.\end{align*}
Set $u_0=1\in \OI$ and for $n\geq 1$, set 
$u_n=\rho_n\left(\widetilde{u}_n, \widetilde{u}_{n+1}\right)\in \OI$. 
Then $\varphi=\bigotimes_{n=0}^\I \Ad(u_n)$ defines an automorphism of
$\bigotimes_{n=0}^\I \OI$. 
In what follows, for $m\geq 0$ we will identify $\bigotimes_{n=0}^m\OI$ canonically with the subalgebra 
$\bigotimes_{n=0}^m\OI\otimes 1\otimes1 \otimes \cdots$
of $\bigotimes_{n=0}^\I \OI$. 

We claim that $\varphi$ is 
asymptotically representable. For $n\geq 1$, let 
$z^{(n)}\colon [0,1]\to M_n$ be a continuos
map with $z^{(n)}_0=1$ and $z^{(n)}_1=\widetilde{u}_n$, 
and such that $[\widetilde{u}_n,z^{(n)}_t]=0$ for all $t\in [0,1]$.
(Such a path exists because the commutant of $\widetilde{u}_n$
in $M_n$ is a finite-dimensional $C^*$-algebra, 
and thus its unitary group is path-connected.) 
We define a continuous unitary path $(v_t)_{t\in [0,\I)}$ 
in $\bigotimes_{n=0}^\I\OI$
as follows. For $k\in\N$ and $t\in [k,k+1]$, we set 
\[v_t=u_0\otimes \cdots \otimes u_k\otimes \rho_{k+1}\big(z^{(k+1)}_{t-k},z^{(k+2)}_{t-k}\big)\]
%\[v_t=\begin{cases*}
%     \rho_1\big(z^{(1)}_t,z^{(2)}_t\big) & if $0\leq t\leq 1$, \\
%     u_1 \otimes \rho_2\big(z^{(2)}_{t-1},z^{(3)}_{t-1}\big) & if $1\leq t\leq 2$,%\\
%     u_1 \otimes u_2\otimes  \rho_3\big(z^{(3)}_{t-2},z^{(4)}_{t-2}\big) & if $2\leq t\leq 3$,\\
%     \vdots\\
%     \end{cases*}\]
which we regard as a unitary in $\bigotimes_{n=0}^\I\OI$ 
in a natural way. 
Then the function $t\mapsto v_t$ is continuous, 
and conditions (1) and (2)
in \autoref{def car} are clearly satisfied. Moreover,
since $\big[v_t,u_0\otimes u_1\otimes \cdots \otimes u_n\big]=0$
for all $t\in  [0,\I)$ and all $n\in\N$, it follows
that $\varphi(v_t)=v_t$ for all $t\in [0,\I)$, and thus
condition (3) in \autoref{def car} is also satisfied. Finally,
to check (4), let $b\in \bigotimes_{n=0}^\I \OI$ be given.
Without loss of generality, we may assume that there
exists $m\in\N$ such that $b$ belongs to $\bigotimes_{n=0}^{m}\OI$, since elements of this form are dense.
Then $\varphi(b)=\Ad(u_0\otimes u_1\otimes\cdots\otimes u_m)(b)$.
For 
$t\geq m+1$, note that $v_t$ has the form 
$v_t=u_0\otimes u_1\otimes \cdots \otimes u_m\otimes \widetilde{v}_t$,
for some unitary $\widetilde{v}_t$ in 
$\bigotimes_{n=m+1}^\I \OI$.
Since $\underbrace{1\otimes \cdots\otimes 1}_{m+1 \mathrm{\ times}} \otimes \widetilde{v}_t$ 
commutes with $b$, we get 
\[v_tbv_t^*= (u_0\otimes u_1\otimes \cdots \otimes u_m) b (u_0\otimes u_1\otimes \cdots \otimes u_m)^*=\varphi(b)\]
whenever $t\geq m+1$, thus establishing (4). The claim follows.

It remains to show that $\varphi$ is aperiodic.
Assume by contradiction 
that there exist $m\in\N$ with $m\geq 1$ and a unitary $z\in \bigotimes_{n=0}^\I \OI$ such that $\varphi^m=\Ad(z)$. Given $\varepsilon<1/2$, find $k\in\N$ and a unitary $w\in \bigotimes_{n=0}^k\mathcal{O}_\infty$ such that $\|z-w\|<\varepsilon$. 
Fix 
$n>\max\{m,k\}$. 
Considering diagonal projections, 
set $\widetilde{p}=e_{1,1}\in M_n$ and $\widetilde{q}=e_{m,m}\in M_n$. Then 
$\widetilde{p}\widetilde{q}=0$ and $\Ad(\widetilde{u}_n^m)(\widetilde{p})=\widetilde{q}$. 
Let $\iota_n\colon \OI\to \bigotimes_{\ell=0}^\I\OI$ denote
the canonical embedding as the $n$-th tensor factor, 
and set $p=\iota_n(\rho_n(\widetilde{p},0))$
and $q=\iota_n(\rho_n(\widetilde{q},0))$. Then
\be\item[(a)] $pq=0$;
\item[(b)] $\varphi^m(p)=q$ (and thus $zpz^*=q$);
\item[(c)] $pw=wp$.
\ee
Using at the third step that $\|z-w\|<\ep$, we get\begin{align*} 1\stackrel{\mathrm{(a)}}{=}\|p-q\|\stackrel{\mathrm{(b)}}{=}\|p-zpz^*\|\leq \|p-wpw^*\|+2\ep\stackrel{\mathrm{(c)}}{=}2\varepsilon<1,\end{align*}
which is a contradiction, so $\varphi^m$ is not inner. It follows that $\varphi$ is aperiodic.
%For the last statement, if $A$ is a Kirchberg algebra and $\varphi\in\Aut(\OI)$ is aperiodic, then one easily shows that 
%$\id_A\otimes \varphi$ determines an aperiodic, asymptotically representable automorphism of $A\otimes\OI\cong A$.
\end{proof}

%\begin{rem}\label{K-theory of model autom cp}
%Let $\psi\in\Aut(\OI)$ be an aperiodic automorphism. Using \autoref{thm:AperOIAsRep} and the Pimsner-Voiculescu exact
%sequence for $\psi$, one immediately gets
%\[K_0(\OI\rtimes_\psi\Z)\cong K_1(\OI\rtimes_\psi\Z)\cong \Z,\]
%with $[1_{\OI\rtimes_\psi\Z}]=1\in K_0(\OI\rtimes_\psi\Z)\cong \Z$. \end{rem}

%\begin{cor}\label{every Kirchberg algebra has a car autom} Let $A$ be a \uca\ such that $A\otimes\OI\cong A$. Then there exists an asymptotically representable, aperiodic automorphism of $A$.\end{cor}
%\begin{proof} Let $\phi\colon A\otimes\OI\to A$ be an isomorphism. Use \autoref{thm:AperOIAsRep} to choose an asymptotically representable, aperiodic automorphism $\psi$ of
%$\OI$. It is then straightforward to show 
%that $\phi\circ\left(\id_A\otimes\psi\right)\circ\phi^{-1}$ is an asymptotically representable automorphism of $A$, and it is
%clearly aperiodic. \end{proof}

The next theorem provides
an existence result for actions on Kirchberg algebras, and it shows 
that the converse to \autoref{cor:KKsymmetric} holds in this setting. 

\begin{thm}\label{thm: rangeInvCRp} Let $A$ be unital Kirchberg algebra.
\be\item There exists an action
of $\T$ on $A$ with the \cRp\ if and only if $A$ is 
nuclearly $KK$-symmetric
(see \autoref{df:KKsymmetric}).
\item For every unital Kirchberg algebra $B$ satisfying 
$A\sim_{KK} C(\T,B)$ unitally, there exists a unique 
(up to conjugacy) circle action 
$\alpha_B\colon \T\to\Aut(A)$ with the continuous
Rokhlin property such that $A^{\alpha_B}\cong B$.
\item Assume that $A$ satisfies the UCT. A triple $(H_0,h_0,H_1)$ consisting of $R(\T)$-modules
$H_0$ and $H_1$, and $h_0\in H_0$, is the equivariant $K$-theory of a circle action on $A$ with the continuous
Rokhlin property, if and only if:
\be
\item there is an isomorphism $K_0(A)\cong K_1(A)$, and
\item there is an isomorphism $\varphi\colon H_0\oplus H_1\to K_0(A)$ with $\varphi(h_0,0)=[1_A]$.
\item the $R(\T)$-module structures on $H_0$ and $H_1$ are trivial.
\ee
Moreover, the circle action on $A$ whose equivariant $K$-theory
is isomorphic to $(H_0,h_0,H_1)$ is unique up to conjugacy.
\ee\end{thm}
\begin{proof} The ``only if'' implication in~(1) is the content
of \autoref{cor:KKsymmetric}. 
Conversely, let $B$ be a nuclear unital \ca\ such that 
$A\sim_{KK}C(\T,B)$ unitally. Since $B$ is unitally 
$KK$-equivalent to a Kirchberg algebra
by \autoref{rem:NucSepKKeqKirch}, 
we may assume that
it is itself a Kirchberg algebra.
We prove the rest of ~(1) 
simultaneously with (2). 

Use \autoref{thm:AperOIAsRep} to find an aperiodic, asymptotically 
representable automorphism $\varphi\in\Aut(\OI)$ and 
set $\psi=\id_B\otimes\varphi\in\Aut(B\otimes\OI)$, 
which we identify with an aperiodic, asymptotically representable automorphism of $B\cong B\otimes\OI$. 
Since $KK(\varphi)=1$, the Pimsner-Voiculescu exact
sequence in $KK$-theory implies 
that there is a unital $KK$-equivalence 
$\OI\rtimes_\varphi\Z
\sim_{KK} C(\T)$. Using this at the last step, 
we get
%crossed product
%by $\varphi$
% satisfying 
\[B\rtimes_\psi\Z\cong B\otimes (\OI\rtimes_\varphi\Z)\sim_{KK}C(\T,B).\]
Hence $A\sim_{KK} B\rtimes_\psi\Z$ unitally, 
and since $B\rtimes_\psi\Z$
is a unital Kirchberg algebra, there exists an isomorphism $B\rtimes_\psi\Z\cong A$.
Denote by $\beta\colon \T\to\Aut(B\rtimes_\psi\Z)$ the dual action of $\varphi$. Then $\beta$ has the continuous Rokhlin
property by \autoref{thm:duality}. 
Let $\alpha_B\colon \T\to\Aut(A)$ denote the action 
induced by $\beta$ under some isomorphism 
$B\rtimes_\psi\Z\cong A$. Then $\alpha_B$ has the continuous 
Rokhlin property, and $A^{\alpha_B}\cong B$. Uniqueness of 
$\alpha_B$ up to conjugacy follows from \autoref{thm: ClassCRp}.

(3) The fact that the equivariant $K$-theory of a circle
action with the \cRp\ satisfies conditions (a) and (b) 
is the content of \autoref{prop:KKTequivalenceCTAalpha}, while
condition (c) follows from \autoref{cor:PredualAsymptInner}. We
prove the converse.

Let $B$ be a unital Kirchberg algebra satisfying the UCT
whose $K$-theory is isomorphic to $(H_0,h_0,H_1)$. By the 
K\"unneth formula, it follows that $C(\T,B)$ has $K$-theory
given by the triple
\[(H_0\oplus H_1, (h_0,0), H_0\oplus H_1).\]
By (a) and (b), we deduce that $A$ and $C(\T,B)$ have 
isomorphic $K$-theories. Since $A$ satisfies the UCT, it follows
that $A$ is unitally $KK$-equivalent to $C(\T,B)$. 
By part~(2) of this theorem, there exists a circle action
on $A$ whose fixed point algebra is isomorphic to $B$. 
In particular, the equivariant $K$-theory of this action
is isomorphic (as groups) to $(H_0,h_0,H_1)$. 
Finally, condition (c) implies that this group isomorphism is
an isomorphism of $R(\T)$-modules, as desired.
\end{proof}

It follows that circle actions on Kirchberg algebras with 
the \cRp\ are ``generated'' by a concrete action. Let
$\varphi\in\Aut(\OI)$ be an aperiodic automorphism, 
and set $C=\OI\rtimes_\varphi\Z$. By Proposition~6.5 in~\cite{Gar_classificationI_2014} and Theorem~5 
in~\cite{Nak_aperiodic_2000}, $C$ does not depend on $\varphi$
and is unitally $KK$-equivalent to $C(\T)$.
Moreover, if $\gamma\colon\T\to\Aut(C)$ denotes the dual 
action of $\varphi$, then $\gamma$ has the \cRp\ by
\autoref{thm:duality}. 

\begin{cor}
Adopt the notation introduced above.
Let $\alpha\colon \T\to\Aut(A)$ be an action on a unital 
Kirchberg algebra $A$. Then $\alpha$ has the \cRp\ if and 
only if $(A,\alpha)$ is conjugate to $(A^\alpha\otimes C,\id_{A^\alpha}\otimes\gamma)$. 
\end{cor}

As an application of \autoref{thm: rangeInvCRp}, we show how 
to compute the number of conjugacy classes of circle actions with
the continuous Rokhlin property that a given Kirchberg algebra has.

\begin{eg} Let $A$ be a unital Kirchberg algebra satisfying the UCT, with %$K$-theory given by
\[K_0(A) \cong  K_1(A) \cong \Z\oplus\Z_6,\]
such that $[1_A]$ corresponds to $(1,0)\in K_0(A)$. By part~(3) of \autoref{thm: rangeInvCRp},
conjugacy classes are in bijection with direct sum decompositions of the form $\Z\oplus\Z_6 \cong H_0\oplus H_1$ that satisfy $(1,0)\mapsto (h_0,0)$ for some $h_0\in H_0$. There are only 4 such direct sum
decompositions, namely:
$$\Z\oplus \Z_6\cong (\Z\oplus \Z_6)\oplus \{0\}\cong (\Z\oplus \Z_2)\oplus \Z_3 \cong (\Z\oplus \Z_3)\oplus \Z_2.$$
(The direct sum decompositions $\{0\}\oplus (\Z\oplus\Z_6)$, $\Z_2\oplus (\Z\oplus\Z_3)$, $\Z_3\oplus (\Z\oplus\Z_2)$ and $\Z_6\oplus \Z$ do not
satisfy condition (3) in \autoref{thm: rangeInvCRp}.) We conclude that there are exactly 4 conjugacy classes.
\end{eg}

We need a small refinement of a well-known result of Kirchberg:

\begin{rem}\label{rem:NucSepKKeqKirch}
Let $A$ be a separable, nuclear \uca. By 
Proposition~8.4.5 in~\cite{RorSto_classification_2002},
there exist a unital Kirchberg algebra $B_0$ and a
$KK$-equivalence $\eta\in KK(A,B_0)$. Let $p\in B_0$
be a projection satisfying 
$[1_A]\times \eta =[p]$, and set $B=pB_0p$. Then
$B$ is a unital Kirchberg algebra (which is $KK$-equivalent
to $B_0$), and $\eta$ induces a unital $KK$-equivalence 
$A\sim_{KK}B$. In particular, every separable, nuclear, \uca\ is \emph{unitally} $KK$-equivalent to a unital Kirchberg algebra.
\end{rem}

As a further application, we prove Theorem~E from the introduction.

\begin{cor}\label{thm:EverycRpKKequivKirch}
Let  
$\alpha\colon \T\to\Aut(A)$ be an action with the \cRp\ on
a separable, nuclear \uca\ $A$. Then there
exist a (unique) unital Kirchberg algebra $D$ and a (unique) 
action $\delta\colon \T\to\Aut(D)$
with the \cRp\ such that $(A,\alpha)\sim_{KK^\T}(D,\delta)$ 
unitally.
\end{cor}
\begin{proof}
Note that $A^\alpha$ is separable and nuclear.
Use \autoref{rem:NucSepKKeqKirch}
to find a unital Kirchberg algebra $B$ which is unitally 
$KK$-equivalent to $A^\alpha$.
Let $D$ be a unital Kirchberg algebra which is unitally
$KK$-equivalent to $C(\T,B)$, and 
use part~(2) of~\autoref{thm: rangeInvCRp} to find an action $\delta\colon \T\to\Aut(D)$ with the \cRp\ satisfying $D^\delta\cong B$. 
Using \autoref{prop:KKTequivalenceCTAalpha} at the first and last step, 
we get the following unital $KK^\T$-equivalences:
\[(A,\alpha)\sim_{KK^\T} (C(\T)\otimes A^\alpha,\texttt{Lt}\otimes\id_{A^\alpha})\sim_{KK^\T} (C(\T)\otimes B,\texttt{Lt}\otimes\id_{B})
 \sim_{KK^\T}(D,\delta).
\]
Uniqueness of $(D,\delta)$ up to conjugacy follows from \autoref{thm: ClassCRp}.
\end{proof}

For actions on Kirchberg algebras, the \cRp\ can be 
completely characterized in terms of $KK$-theory; see \autoref{characterization of cRp}. 
We use this to show that the \Rp\ implies the \cRp\ for
circle actions on Kirchberg algebras with finitely generated $K$-theory,
and find explicit examples that show that they differ in general; see \autoref{eg: not cRp 1} and \autoref{eg: not cRp 2}. 
%For actions on 
%commutative \ca s, we show that there is no difference between the 
%Rokhlin property and its continuous version; see \autoref{comm systsms are tensor products}.

%\subsection{Kirchberg algebras.}
%We characterize those circle actions with the Rokhlin property on Kirchberg algebras that have the continuous \Rp.
%We point out that no UCT assumptions are needed.

\begin{cor}\label{characterization of cRp} 
Let  
$\alpha\colon \T\to\Aut(A)$ be an action with the \Rp\ on
a unital Kirchberg algebra $A$.
Then $\alpha$ has the continuous \Rp\ \ifo $KK(\widehat{\alpha})=[\id_{A\rtimes_\alpha\T}]$.\end{cor}
\begin{proof} If $\alpha$ has the continuous Rokhlin property, then $\widehat{\alpha}$ is asymptotically representable by
\autoref{thm:duality}. Hence it is asymptotically inner, and $KK(\widehat{\alpha})=[\id_{A\rtimes_\alpha\T}]$.

Conversely, assume that 
$KK(\widehat{\alpha})=[\id_{A\rtimes_\alpha\T}]$. 
Let $\varphi\in\Aut(\OI)$ be an aperiodic automorphism, 
and note that it is automatically asymptotically representable
by \autoref{thm:AperOIAsRep}.
By fixing an isomorphism $A\otimes\OI\cong A$, 
we identify $\id_A\otimes \varphi$ with an 
aperiodic and asymptotically representable automorphism
$\psi\in\Aut(A)$. 
Since $\check{\alpha}$ is aperiodic by Proposition~4.5 in~\cite{Gar_classificationI_2014},
it follows from Theorem~5 in~\cite{Nak_aperiodic_2000} that
$\check{\alpha}$ is cocycle conjugate to $\psi$, and in
particular $\check{\alpha}$
is asymptotically representable. \end{proof}

We recall the construction of the $\mbox{PExt}$-group. Given abelian groups $G_1$ and $G_2$, the group $\mbox{PExt}(G_1,G_2)$ is
the subgroup of $\Ext(G_1,G_2)$ consisting of the pure extensions of $G_2$ by $G_1$, that is, those extensions 
$0\to G_1\to G\to G_2\to 0$ such that for every finitely generated subgroup $H_2$ of
$G_2$, if $H$ denotes the preimage of $H_2$ under the canonical quotient map $G\to G_2$, then the induced extension
$$0\to G_1\cap H\to H\to H_2\to 0$$
splits.
See \cite{Sch_pext_2003} for more about the $\mbox{PExt}$-group. We refer the reader to Example 8.4.14 in \cite{RorSto_classification_2002} for the definition
of the $KL$-class of an automorphism.

\begin{cor}\label{cRp and Rp are the same on K algs} 
Let  
$\alpha\colon \T\to\Aut(A)$ be an action with the \Rp\ on
a unital Kirchberg algebra $A$.
Assume that $\mbox{PExt}(K_\ast(A^\alpha),K_{\ast}(SA^\alpha))=0$. Then $\alpha$ has the continuous \Rp. In particular, if $A$ has finitely
generated $K$-theory, then every circle action on $A$ with the \Rp\ has the continuous \Rp. \end{cor}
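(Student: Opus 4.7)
The strategy is to apply Theorem~\ref{characterization of cRp}, which reduces the problem to verifying that $KK(\check{\alpha})=1$ in $KK(A^\alpha,A^\alpha)$, where $\check{\alpha}$ is the predual automorphism supplied by Theorem~\ref{thm: dual action}.

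First I would establish that $\check{\alpha}$ is $KL$-trivial. Because $\alpha$ has the Rokhlin property, Theorem~\ref{thm: dual action} (and the discussion around Definition~\ref{def: ar}) gives that $\check{\alpha}$ is approximately representable, and hence approximately inner. The general theory of the $KL$-bifunctor for separable nuclear $C^*$-algebras, as in Example~8.4.14 of \cite{Ror_BookClassif} cited in the paper, assigns the trivial $KL$-class to approximately inner automorphisms, yielding
\[
KL(\check{\alpha})=1 \in KL(A^\alpha,A^\alpha).
\]

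Next I would invoke the short exact sequence
\[
0\to \mathrm{PExt}(K_*(A^\alpha),K_{*+1}(A^\alpha))\to KK(A^\alpha,A^\alpha)\to KL(A^\alpha,A^\alpha)\to 0,
\]
which is available for the Kirchberg algebra $A^\alpha$ (a unital Kirchberg algebra by Theorem~\ref{thm: Kirchberg algs, aperiodic}). By hypothesis, the left-hand PExt group vanishes, so the quotient map $KK(A^\alpha,A^\alpha)\to KL(A^\alpha,A^\alpha)$ is an isomorphism. Applying this to the $KL$-triviality from the previous paragraph, together with $KL(\id_{A^\alpha})=1$, gives $KK(\check{\alpha})=1$, which is precisely what Theorem~\ref{characterization of cRp} requires.

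For the ``in particular'' clause, I would argue as follows. If $K_*(A)$ is finitely generated, then by Theorem~\ref{thm: obstr Kthy}(1) the groups $K_j(A^\alpha)$ embed as subgroups of $K_j(A)$, and subgroups of finitely generated abelian groups are finitely generated. Since $\mathrm{PExt}(G,H)=0$ whenever $G$ is finitely generated, the hypothesis of the first part of the corollary is automatically satisfied, and the conclusion follows.

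The main obstacle is the PExt-to-$KK$-to-$KL$ exact sequence in the second paragraph: under the UCT it is a direct consequence of the UCT exact sequence together with the definition of $KL$, but since no UCT hypothesis is assumed, one must appeal to Rørdam's intrinsic description of $KL$ and verify that, for Kirchberg algebras, the kernel of $KK\to KL$ coincides with the subgroup of pure-extension classes. Everything else is a packaging of results already developed in the paper (Theorems~\ref{thm: dual action}, \ref{thm: Kirchberg algs, aperiodic}, \ref{thm: obstr Kthy} and~\ref{characterization of cRp}).
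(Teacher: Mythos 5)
Your proof is correct and follows essentially the same route as the paper: approximate representability of $\check{\alpha}$ (from Theorem~\ref{thm: dual action}) gives $KL$-triviality, the vanishing of $\mathrm{PExt}(K_\ast(A^\alpha),K_{\ast+1}(A^\alpha))$ upgrades this to $KK$-triviality, Theorem~\ref{characterization of cRp} concludes, and the finitely generated case is handled identically via Theorem~\ref{thm: obstr Kthy}. The caveat you flag about identifying the kernel of $KK\to KL$ with the $\mathrm{PExt}$ subgroup in the absence of a UCT hypothesis is legitimate, but it is a point the paper's proof also passes over without comment, so your argument is if anything slightly more careful than the original.
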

\begin{proof} Since $\mbox{PExt}(K_\ast(A^\alpha),K_{\ast}(SA^\alpha))=0$, it follows that an automorphism of $A^\alpha$ is $KK$-trivial if and only
if it is $KL$-trivial. Let $\check{\alpha}$ be the predual automorphism of $\alpha$. Then $\check{\alpha}$ is approximately inner by Proposition~3.7 in~\cite{Gar_classificationI_2014}, and in particular $KL$-trivial. 
The first part of the corollary then follows from
\autoref{characterization of cRp} above.

If the $K$-groups of $A$ are finitely generated, then the condition
$$\mbox{PExt}(K_\ast(A^\alpha),K_{\ast}(SA^\alpha))=0$$
is automatically satisfied, since the $K$-groups of $A^\alpha$ are also finitely generated by Theorem~5.5 
in~\cite{Gar_classificationI_2014}. This finishes the proof. \end{proof}

In the corollary above, the condition $\mbox{PExt}(K_\ast(A^\alpha),K_{\ast}(SA^\alpha))=0$ will also be satisfied if
the $K$-groups of $A$ are (possibly infinite) direct sums of cyclic groups. %However, it is in general unclear whether one can
%replace said condition with
%$$\mbox{PExt}(K_\ast(A),K_{\ast+1}(A))=0,$$
%which is significantly easier to check in practice. 

Next, we construct examples of circle actions that have the Rokhlin property but not the
continuous \Rp, showing that these two notions are not equivalent in general. This can happen even on Kirchberg algebras that satisfy
the UCT.
We need to introduce some notation first. Let $A$ be a \uca\ and let $\varphi\in \Aut(A)$ be approximately inner.
With $\iota\colon A\to A\rtimes_\varphi\Z$ denoting the canonical inclusion, the Pimsner-Voiculescu exact sequence for $\varphi$ reduces
to
\begin{equation}\label{eqn:PV}\xymatrix{0\ar[r] & K_j(A)\ar[r]^-{K_j(\iota)} & K_j(A\rtimes_\varphi \Z)\ar[r] & K_{1-j}(A)\ar[r]& 0},\end{equation}
for $j=0,1$. We denote the class of the above extension by $\eta_j(\varphi)$, and by
$$\eta\colon \overline{\mbox{Inn}}(A)\to \mbox{Ext}(K_1(A),K_0(A))\oplus \mbox{Ext}(K_0(A),K_1(A))$$
the map $\eta(\varphi)=(\eta_0(\varphi),\eta_1(\varphi))$ for $\varphi\in \overline{\mbox{Inn}}(A)$. %It is well known that $\eta$ is
%a group homomorphism if $A$ satisfies the UCT, %(the operation on $\overline{\mbox{Inn}}(A)$ is composition) 
%but we will not make
%use of this fact here.

\begin{eg}\label{eg: not cRp 1} Let $G_1=\Z\left[\frac{1}{2}\right]$, regarded as the abelian group generated by elements $t_n$
subject to the relations
$2t_{n+1}=t_n$
for all $n\in \N$. Then $G_1$ is torsion free. Let $G_0=\Z$, and let $E$ be the abelian group generated by the set
$\{x,y_n\colon n\in\N\}$, subject to the relations
$2y_{n+1}=y_n+x$
for all $n\in \N$. There is an extension
$$0\to G_0\to E\to G_1\to 0,$$
where the map $G_0\to E$ is determined by $1\mapsto x$, and the map $E\to G_1$ is the corresponding quotient map. In the next two claims, we will
show that this extension is pure but not trivial (that is, it does not split). 

\textbf{Claim~1:} \emph{$G_0$ is a pure subgroup of $E$.}
For $n\in\N$, let $E_n$ be the subgroup of $E$ generated by $x$ and $y_n$. Then $E_n\cong \Z x\oplus \Z y_n$, and $E_n\subseteq E_{n+1}$ for
all $n\in\N$. Let $H\leq E$ be a finitely generated subgroup containing
$x$. Then there exists $m\in\N$ with $H\subseteq E_m$. Set $H_1=H/G_0$,
which is a subgroup of $\Z y_m$ and is therefore free. It follows that
the extension $0\to G_0 \to H \to H_1\to 0$ splits, proving the claim.

\textbf{Claim~2:} \emph{$G_0$ is not a direct summand in $E$.} Arguing
by contradiction, let $F\leq E$ be a direct complement of $G_0$, 
and note that $F\cong E/G_0$. Let $\pi\colon E\to F$ and $\iota\colon F\to E$ be the canonical quotient map and embedding, respectively. 
For $n\in\N$, set $z_n=\iota(\pi(y_n))\in E$. Then  
$2z_{n+1}=z_n$ for all $n\in\N$, and there exists a unique $k_n\in\Z$
such that $z_n=y_n+k_nx$. Hence 
\[z_n=2z_{n+1}=2y_{n+1}+2k_{n+1}x=y_n+x+2k_{n+1}x=y_n+(2k_{n+1}+1)x.\]
In particular, $k_n=2k_{n+1}-1$ for all $n\in\N$. It follows that
$k_1=2^nk_{n+1}+2^n-1$ and thus $k_1+1$ is divisible by $2^n$ for 
all $n\in\N$. This contradiction proves the claim. 

Denote by $\xi \in
\mbox{Ext}(G_1,G_0)$ the extension class determined by $E$. Note that $\xi\neq 0$.
Use Elliott's classification of A$\T$-algebras (see 
the comments before Proposition 3.2.7 in
\cite{RorSto_classification_2002}),
to find a simple, unital A$\T$-algebra $A$ with real rank zero, such that $K_j(A)\cong G_j$ for $j=0,1$. Use Theorem 3.1 in
\cite{KisKum_class_1998} in the case $i=1$ to find an approximately inner automorphism $\varphi$ of $A$ such that $\eta(\varphi)=(\xi,0)$.
The proof of Theorem 3.1 in \cite{KisKum_class_1998} is constructive, and the case $i=1$ (which is presented in Subsection 3.11 in
\cite{KisKum_class_1998}) shows that for $n\in \N$, there are a circle algebra $A_n$, an embedding $\psi_n\colon A_n\to A_{n+1}$, and a unitary
$u_n\in A_n$, satisfying
$$\Ad(u_{n+1})\circ \psi_n=\psi_n\circ\Ad(u_n)$$
and $\varinjlim \Ad(u_n)=\varphi$. It is immediate to check that such a direct limit action is approximately representable in the sense of
Definition~3.4 in~\cite{Gar_classificationI_2014}. Moreover,
by construction we have $K_0(A\rtimes_\varphi\Z)\cong E$.

Denote by $\alpha\colon\T\to\Aut(A\rtimes_\varphi\Z)$ the dual action of $\varphi$. Then $\alpha$ has the \Rp\ by Proposition 3.6 in
\cite{Gar_classificationI_2014}. On the other hand, since $\eta(\varphi)$ is not the trivial class, we conclude that $\varphi$ is not
asymptotically inner, and hence $\alpha$ does not have the continuous \Rp\ by
\autoref{cor:PredualAsymptInner}.\end{eg}

The example above can be adapted to construct a circle action on a UCT Kirchberg algebra with \Rp\ but not the
continuous \Rp.

\begin{eg} \label{eg: not cRp 2}
Adopt the notation of the previous example. In particular, 
we use the approximately inner 
automorphism $\varphi\in\Aut(A)$ which
satisfies $K_0(A\rtimes_\varphi \Z)\cong E$.

\textbf{Claim:} \emph{$\varphi$ is aperiodic.} 
Arguing by contradiction, suppose that there exist
$n\in\N\setminus\{0\}$ and $w\in\U(A)$ 
such that $\varphi^n=\Ad(w)$. Set $v=w\varphi(w)\cdots \varphi^{n-1}(w)$, which is also a unitary in $A$. One
checks that $\varphi^{n^2}=\Ad(v)$ and that $v$ is $\varphi$-invariant. 
Denote by $D$ the twisted crossed product of $A$ by $\Z$
with respect to the twist induced by $v$.
Using Theorem~2.4 in~\cite{OlePed_partially_1986} at the first
step, and the fact that $\T$ is compact at the second step, 
there exist isomorphisms
\[A\rtimes_\varphi\Z\cong \mathrm{Ind}_{(n^2\Z)^{\perp}}^\T (D)\cong 
 C(\T,D).
\]
In particular, $K_0(A\rtimes_\varphi\Z)\cong 
K_1(A\rtimes_\varphi\Z)\cong E$. Since 
$K_j(A)\cong G_j$ for $j=0,1$, the exact sequence (\ref{eqn:PV}) for $j=1$ gives an extension
\[0\to G_1 \to E \to G_0\to 0.\]
We will show that there is no embedding of $G_1$ into $E$;
for this, it suffices to prove that no element of $E$ is
divisible by 2 infinitely many times. 
Elementary manipulations show that any $e\in E$
has one (and only one) of the following forms:
\be\item[(i)] $e=ax$ for a uniquely determined $a\in\Z$;
\item[(ii)] $e=ax+by_1$ for uniquely determined $a,b\in\Z$;
\item[(iii)] $e=ax+by_m$ for uniquely determined 
$a,b\in\Z$ and $m>1$ with $b$ \emph{odd}.
\ee

In the first case, there exists $f\in E$ with $2f=e$ 
if and only if $a$
is divisible by 2 (and in this case $f=\frac{a}{2}x$). 
In the second case, such an $f$ exists 
if and only if $a-b$ is divisible by 2, and in this case 
\[f=\begin{cases*}
     \frac{a}{2}x+\frac{b}{2}y_1 & if $b$ is even \\
     \frac{a-b}{2}x+by_2 & if $b$ is odd.
    \end{cases*}.\]
Finally, in the third case $f$ exists if and only if 
$a$ is odd and in this case $f=\frac{a-b}{2}x+by_{m+1}$. 
In all cases, the element $e$ is not divisible by $2^k$ for 
\mbox{all} $k\in\N$. Since every element in $G_1$ has this 
property, it follows that there is no embedding 
$G_1\hookrightarrow E$.
This contradiction implies that $\varphi^n$ is not inner, and
thus $\varphi$ is aperiodic.

It follows that the crossed
product $A\rtimes_\varphi\Z$ is simple by Theorem 3.1 in \cite{Kis_outer_1981}, since $A$ is simple and $\varphi$
is aperiodic by the previous claim. 
Set $B=A\otimes\OI$ and $\psi=\varphi\otimes\id_{\OI}$, which is
an automorphism of $B$. 
Since the canonical unital map $\C\to \OI$ is a 
$KK$-equivalence, there is a canonical isomorphism 
$K_\ast(A)\cong K_\ast(B)$. Moreover, this identification 
identifies $\eta(\psi)$ with $\eta(\varphi)$. 
With $\beta\colon\T\to\Aut(B\rtimes_\psi\Z)$ denoting the dual action of $\psi$, the same argument used in \autoref{eg: not cRp 1} shows that
$\beta$ has the \Rp\ and does not have the continuous \Rp. Finally, note that $B\rtimes_\psi\Z\cong (A\rtimes_\varphi\Z)\otimes\OI$ is a
Kirchberg algebra, and it satisfies the UCT because $A$ does, since crossed products by $\Z$ preserve the UCT. \end{eg}

M. Izumi has found \cite{Izu_preparation_2014} examples of $\Z_2$-actions on $\OI$ that are approximately representable (see Definition 3.6
in \cite{Izu_finiteI_2004}) but not asymptotically representable. 
Using duality, one obtains
$\Z_2$-actions on UCT Kirchberg algebras that have the \Rp\ but not the continuous \Rp.
These Kirchberg algebras have infinitely generated $K$-theory, and Izumi shows that his method cannot produce similar examples with
finitely generated $K$-groups. It seems plausible that a result similar to \autoref{cRp and Rp are the same on K algs} holds
for finite groups.% actions as well.


\begin{thebibliography}{10}

\bibitem{AraKub_compact_2017}
{\sc Y.~Arano, and Y.~Kubota}, 
{\em Compact Lie group actions with the continuous
Rokhlin property}, 
J. Funct. Anal. 272 (2017), no. 2, pp.~522--545.

\bibitem{Arv_notes_1977} {\sc W. Arveson}, 
{\it Notes on extensions 
of $C^*$-algebras}, Duke Math. J. 44 (1977), pp.~329--355.

\bibitem{BaaSka_algebres_1989}
{\sc S.~Baaj and G.~Skandalis}, {\em C*-alg\`ebres de Hopf et th\'eorie de 
Kasparov \'equivariante}.
K-Theory, 2 (1989), pp.~683--721.

\bibitem{BenMey_general_2017}
{\sc R.~Bentmann, and R.~Meyer}, {\em A more general method to classify
up to equivariant {$KK$}-equivalence}, Doc. Math. 22 (2017), pp.~423--454.

\bibitem{Bla_Ktheory_1998}
{\sc B.~Blackadar}, {\em K-theory for operator algebras.}
Mathematical Sciences Research Institute Publications, 5. Cambridge University Press, Cambridge, 1998. xx+300 pp.

%\bibitem{BraKisRob_rohlin_2007}
%{\sc O.~Bratteli, A.~Kishimoto, and D.~W. Robinson}, {\em Rohlin flows on the
%  {C}untz algebra {$\mathcal{O}_\infty$}}, J. Funct. Anal., 248 (2007),
%  pp.~472--511.

\bibitem{Con_classification_1976}
{\sc A.~Connes}, {\em {Classification of injective factors. {C}ases {$II_{1},$}
  {$II_{\infty },$} {$III_{\lambda },$} {$\lambda \not=1$}}}, Ann. of Math.
  (2), 104 (1976), pp.~73--115.

%\bibitem{ConHig_deformations_1990}
%{\sc A.~Connes and N.~Higson}, {\em D\'eformations, morphismes asymptotiques et
%  {$K$}-th\'eorie bivariante}, C. R. Acad. Sci. Paris S\'er. I Math., 311
%  (1990), pp.~101--106.

%\bibitem{Ell_classification_1993}
%{\sc G.~A. Elliott}, {\em {On the classification of {$C^*$}-algebras of real rank zero}}, J. Reine Angew. Math., 443 (1993), pp.~179--219.

\bibitem{Gar_circle_2014}
{\sc E.~Gardella}, {\em {Circle actions on UHF-absorbing {$C^*$}-algebras}},
  Houston J. Math., 44 (2018), no. 2, pp.~571--601.

\bibitem{Gar_classificationI_2014}
\leavevmode\vrule height 2pt depth -1.6pt width 23pt, {\em {Classification
  theorems for circle actions on Kirchberg algebras, I}},  (2014).
\newblock Preprint, arXiv:1405.2469.

\bibitem{Gar_crossed_2014}
\leavevmode\vrule height 2pt depth -1.6pt width 23pt, {\em {Crossed products by
  compact group actions with the Rokhlin property}}, J. Noncommut. Geom., 11 (2017), no. 4, pp.~1593--1626.
  
\bibitem{Gar_crossed_2019}
\leavevmode\vrule height 2pt depth -1.6pt width 23pt, {\em Compact group actions with the Rokhlin property}, 
Trans. Amer. Math. Soc., 371 (2019), no. 4, pp.~2837--2874.


\bibitem{GarLup_applications_2018}
{\sc E.~Gardella and M.~Lupini}, {\em {Applications of model theory to 
$C^*$-dynamics}}, J. Funct. Anal. 275 (2018), no. 7, pp.~1889--1942.

\bibitem{GarSan_equivariant_2016}
{\sc E.~Gardella, and L.~Santiago}, 
{\em Equivariant $\ast$-homomorphisms, Rokhlin constraints and equivariant UHF-absorption}, J. Funct. Anal. 270 
(2016), no. 7, pp.~2543--2590.

\bibitem{Ger_kktheory_1996}
{\sc E.~Germain}, {\em {{KK}-theory of reduced free 
product {$C^{\ast}$}-algebras}}, Duke Math. J. 82 (1996), pp.~707--723.

%\bibitem{HerJon_period_1982}
%{\sc R.~H. Herman and V.~F.~R. Jones}, {\em {Period two automorphisms of {${\rm  UHF}$} {$C^{\ast} $}-algebras}}, J. Funct. Anal., 45 (1982), pp.~169--176.

\bibitem{HerOcn_spectral_1986}
{\sc R.~H. Herman and A.~Ocneanu}, {\em {Spectral analysis for automorphisms of
  {UHF} {$C^\ast$}-algebras}}, J. Funct. Anal., 66 (1986), pp.~1--10.

\bibitem{HirWin_rokhlin_2007}
{\sc I.~Hirshberg and W.~Winter}, {\em {Rokhlin actions and self-absorbing
  {$C^*$}-algebras}}, Pacific J. Math., 233 (2007), pp.~125--143.

\bibitem{HouTho_universal_1999}
{\sc T.~G. Houghton-Larsen and K.~Thomsen}, {\em Universal (co)homology
  theories}, $K$-Theory, 16 (1999), pp.~1--27.

\bibitem{Izu_finiteI_2004}
{\sc M.~Izumi}, {\em {Finite group actions on {$C^*$}-algebras with the
  {R}ohlin property. {I}}}, Duke Math. J., 122 (2004), pp.~233--280.

\bibitem{Izu_finiteII_2004}
\leavevmode\vrule height 2pt depth -1.6pt width 23pt, {\em {Finite group
  actions on {$C^*$}-algebras with the {R}ohlin property. {II}}}, Adv. Math.,
  184 (2004), pp.~119--160.

\bibitem{Izu_preparation_2014}
\leavevmode\vrule height 2pt depth -1.6pt width 23pt. 
\newblock In preparation (2014).

%\bibitem{JeoOsa_extremally_1998}
%{\sc J.~A. Jeong and H.~Osaka}, {\em Extremally rich {$C^*$}-crossed products
%  and the cancellation property}, J. Austral. Math. Soc. Ser. A, 64 (1998),
%  pp.~285--301.

%\bibitem{Jon_finite_1980}
%{\sc V.~F.~R. Jones}, {\em {Actions of finite groups on the hyperfinite type {${\rm II}_{1}$}\ factor}}, Mem. Amer. Math. Soc., 28 (1980), pp.~v+70.

\bibitem{Kas_equivariant_1988}
{\sc G.~G. Kasparov}, {\em Equivariant {$KK$}-theory and the {N}ovikov
  conjecture}, Invent. Math., 91 (1988), pp.~147--201.

%\bibitem{KirPhi_embedding_2000}
%{\sc E.~Kirchberg and N.~C. Phillips}, {\em {Embedding of exact
%{$C^*$}-algebras in the {C}untz algebra {$\mathcal O_2$}}}, J. Reine Angew. Math., 525 (2000), pp.~17--53.

\bibitem{Kis_outer_1981}
{\sc A.~Kishimoto}, {\em {Outer automorphisms and reduced crossed products of
  simple {$C^{\ast} $}-algebras}}, Comm. Math. Phys., 81 (1981), pp.~429--435.

\bibitem{Kis_rohlin_1995}
\leavevmode\vrule height 2pt depth -1.6pt width 23pt, {\em {The {R}ohlin
  property for automorphisms of {UHF} algebras}}, J. Reine Angew. Math., 465
  (1995), pp.~183--196.

\bibitem{KisKum_class_1998}
{\sc A.~Kishimoto and A.~Kumjian}, {\em The {$\rm Ext$} class of an
  approximately inner automorphism}, Trans. Amer. Math. Soc., 350 (1998),
  pp.~4127--4148.
    
\bibitem{NesMey_homological_2010}
{\sc R.~Meyer, and R.~Nest}, {\em {Homological algebra in bivariant K-theory and other triangulated categories}}. I. 
Triangulated categories, 236--289,
London Math. Soc. Lecture Note 375, 2010.

\bibitem{Nak_aperiodic_2000}
{\sc H.~Nakamura}, {\em {Aperiodic automorphisms of nuclear purely infinite
  simple {$C^*$}-algebras}}, Ergodic Theory Dynam. Systems, 20 (2000),
  pp.~1749--1765.
  
\bibitem{Ocn_actions_1985}
{\sc A.~Ocneanu}, {\em {Actions of discrete amenable groups on von {N}eumann
  algebras}}, vol.~1138 of {Lecture Notes in Mathematics}, Springer-Verlag,
  Berlin, 1985.
  
\bibitem{OlePed_partially_1986}
{\sc D.~Olesen and G.~Pedersen}, {\em Partially inner
C*-dynamical systems}, J. Funct. Anal. 66 (1986), pp.~262--281.

\bibitem{OsaPhi_crossed_2012}
{\sc H.~Osaka and N.~C. Phillips}, {\em {Crossed products by finite group
  actions with the {R}okhlin property}}, Math. Z., 270 (2012), pp.~19--42.

\bibitem{Ped_algebras_1979}
{\sc G.~K. Pedersen}, {\em {$C^{\ast} $}-algebras and their automorphism
  groups}, vol.~14 of London Mathematical Society Monographs, Academic Press, London-New York, 1979.

%\bibitem{Phi_equivariant_1987}
%{\sc N.~C. Phillips}, {\em Equivariant {$K$}-theory and freeness of group
%  actions on {$C^*$}-algebras}, vol.~1274 of Lecture Notes in Mathematics,
%  Springer-Verlag, Berlin, 1987.

\bibitem{Phi_classification_2000}
{\sc N.~C. Phillips}, {\em {A classification
  theorem for nuclear purely infinite simple {$C^*$}-algebras}}, Doc. Math., 5
  (2000), pp.~49--114.
  
\bibitem{Phi_preparation_2021}
{\sc N.~C.~Phillips}. In preparation (2021).

%\bibitem{Phi_equivariant_2011}
%\leavevmode\vrule height 2pt depth -1.6pt width 23pt, {\em {Equivariant
%  semiprojectivity}},  (2011).
%\newblock Preprint, arXiv:1112.4584.

\bibitem{PhiVio_simple_2013}
{\sc N.~C. Phillips and M.~G. Viola}, {\em The K-Theory of a Simple Separable Exact C*-Algebra Not Isomorphic to Its Opposite Algebra}, Preprint, arxiv:1309.4142v2.
  
\bibitem{RobRor_divisibility_2013}
{\sc L.~Robert and M.~R{\o}rdam}, {\em Divisibility properties for C*-algebras}, 
Proc. Lond. Math. Soc. (3) 106 (2013), no. 6, pp.~1330--1370.

\bibitem{RorSto_classification_2002}
{\sc M.~R{\o}rdam and E.~St{\o}rmer}, {\em Classification of nuclear
  {$C^*$}-algebras. {E}ntropy in operator algebras}, vol.~126 of Encyclopaedia
  of Mathematical Sciences, Springer-Verlag, Berlin, 2002.
%\newblock Operator Algebras and Non-commutative Geometry, 7.

\bibitem{Sch_pext_2003}
{\sc C.~L. Schochet}, {\em A {P}ext primer: pure extensions and {$\lim^1$} for
  infinite abelian groups}, vol.~1 of New York Journal of Mathematics. NYJM
  Monographs,
  2003.

%\bibitem{Ska_bifoncteur_1991}
%{\sc G.~Skandalis}, {\em Le bifoncteur de {K}asparov n'est pas exact}, C. R.
%  Acad. Sci. Paris S\'er. I Math., 313 (1991), pp.~939--941.

\bibitem{Sza_short_2015}
{\sc G.~Szab\'o}, {\em A short note on the continuous {R}okhlin property and
  the universal coefficient theorem in {$E$}-theory}, Canad. Math. Bull., 58
  (2015), pp.~374--380.
  
\bibitem{Tho_kktheory_2003}
{\sc K.~Thomsen}, {\em On the {KK}-theory and the {E}-theory of amalgamated 
free products of {$C^{\ast}$}-algebras}, J. Funct. Anal. 201 (2003), pp.~30--56. 

\end{thebibliography}
\end{document}